\DeclareMathOperator\argmax{argmax}
\newtheorem{theorem}{Theorem}[section]
\newtheorem{lemma}[theorem]{Lemma}
\newtheorem{proposition}[theorem]{Proposition}
\newtheorem{op}[theorem]{Open Problem}
\numberwithin{equation}{section}
\icmltitlerunning{Buckley-Osthus \& block preferential attachment models}
\begin{document}

\twocolumn[
\icmltitle{The Buckley-Osthus model and the block preferential attachment model: statistical analysis and application}



\icmlsetsymbol{equal}{*}

\begin{icmlauthorlist}
\icmlauthor{Xin Guo}{UCB}
\icmlauthor{Fengmin Tang}{Stanford}
\icmlauthor{Wenpin Tang}{Columbia}
\end{icmlauthorlist}

\icmlaffiliation{Columbia}{Department of Industrial Engineering and Operations Research, Columbia University, USA}
\icmlaffiliation{UCB}{Department of Industrial Engineering and Operations Research, University of California, Berkeley, USA}
\icmlaffiliation{Stanford}{ICME, Stanford University, USA}

\icmlcorrespondingauthor{Wenpin Tang}{wt2319@columbia.edu}

\icmlkeywords{Buckley-Osthus model, consistency, block preferential attachment model, maximum likelihood estimate}

\vskip 0.3in
]



\printAffiliationsAndNotice{\icmlEqualContribution} 

\begin{abstract}
This paper is concerned with statistical estimation of two preferential attachment models: 
the Buckley-Osthus model and a new block preferential attachment model. 
We prove that the maximum likelihood estimates for both models are consistent. 
We perform simulation studies to corroborate our theoretical findings. 
We also apply both models to study the evolution of a real-world network. 
A list of open problems are presented.
\end{abstract}

\section{Introduction}
Networks are ubiquitous.
A network consists of elements or actors represented by nodes or vertices, with  interactions modeled by links or edges.
Network studies are usually data-driven via graph theoretical analysis, aiming to determine the influence of its constituents.
In the era of data deluge, there is a surge of interest in exploring the features of large networks such as the Internet \cite{GGP, BHMBM}, social media \cite{Cen10, WJLC}, biological systems \cite{JMBO, BGL11}, and more recently blockchains \cite{N08, C16}.

Large networks are complex, and their intricate structures can be modeled by random graphs.
The best-known random graph model is the {\em Erd\"{o}s-R\'enyi graph} \cite{ER59, ER61}, where any two nodes are linked independently with a fixed probability. 
See \cite{B01, Durrett07} for further results of the Erd\"{o}s-R\'enyi graph such as the limiting degree distribution and emergence of the giant component.

Despite its simple form and wide applications, the Erd\"{o}s-R\'enyi model is often criticized  for the following two reasons.
\begin{itemize}
\item The Erd\"{o}s-R\'enyi model does not take  into account the heterogeneity, whereas
a real-world network is heterogeneous, composed of a few communities or groups and exhibiting different characteristics across these communities.

\item The Erd\"{o}s-R\'enyi graph has an unrealistic Poisson degree distribution, whereas
in many real-world networks, the degree of a typical node is observed to be power-law or heavy-tailed distributed.
\end{itemize}
To tackle the first point, \cite{HLL} introduced the {\em stochastic blockmodel} (SBM) 
which is a generalization of the Erd\"{o}s-R\'enyi graph with community structure.
In the past decade, there has been considerable progress on the SBM including community detection \cite{NG02, BC09, MNS15, ABH16, MNS18} and statistical estimation \cite{DPR08, BCL11, BCCZ13, WB17}.
To address the second point, \cite{BA99} proposed the {\em preferential attachment model} (PAM), which is an instance of the {\em scale-free network}.
It attaches a new node to existing ones according to the popularity, i.e., the degrees of the existing nodes, and 
the resulting degree distribution follows a power law.
See also \cite{KN11, ZLZ12, CLX18} for the degree-corrected SBM.

Since the work of Barab\'{a}si and Albert, preferential attachment and related models have attracted much attention from combinatorics and probability communities, see e.g. \cite{BRST01, BBCR, BR03, BR04, PRR13, BSZ15, BMR}.
But it was not until recently that statistical estimation of the PAM was investigated.
\cite{GV17, GVCV, WR19} considered estimates of an undirected PAM, while the counterpart of a directed PAM was studied by \cite{WWDR}.

This paper is concerned with two classes of PAM: 
\begin{itemize}
\item
the {\em Buckley-Osthus model} \cite{BO04}, which is a one-parameter PAM allowing for self-loops;
\item
a new model that adds a community structure to the PAM, which we call  {\em block preferential attachment model} (BPAM).
\end{itemize}
These two models capture different features of the underlying network.
The parameter in the Buckley-Osthus model identifies the exponent of the power-law degree distribution, and the community structure in the BPAM characterizes the interactions across various groups.
The relation between the BPAM and the PAM is the same as that between the SBM and the Erd\"{o}s-R\'enyi graph.

Meanwhile, inn contrast to the SBM, the BPAM hinges on the order of nodes added, thus describing the evolution of the network.
Thus, the BPAM naturally combines the idea of the SBM and the PAM, and may overcome the two drawbacks of the Erd\"{o}s-R\'enyi model.
Though some related characteristics of the BPAM  are implicitly discussed in the work of \cite{J13, HS18}, it is the first time in this work the BPAM is explicitly formulated.
We also provide statistical analysis under this framework.

In this paper, we show the asymptotic consistency of the maximum likelihood estimate (MLE)
for the aforementioned two models.
Precisely, we prove that (1) the MLE of the Buckley-Osthus model is asymptotically normal, 
(2) the MLE of the BPAM with observed community memberships is consistent.
Contrary to the SBM, the likelihood of either the Buckley-Osthus model or the BPAM does not belong to the exponential family.
Thus, the proof of these results does not follow from standard theory, and requires extra mathematical tools and careful analysis.
We also compare the BPAM with the SBM, and apply both models to study the evolution of real-world networks, and in particular the Bitcoin network.
We present a list of open problems such as community detection and efficient estimation of the BPAM.
We hope that this work serves as a first step towards understanding the BPAM, a natural model for modeling network evolution with community structures.

The rest of this paper is organized as follows.
\begin{itemize}
\item
In Section \ref{s2}, we present the models of interest, and state the main results.
\item
In Section \ref{s3}, we conduct simulation studies to corroborate our theoretical findings.
We also apply the BPAM to a real-world data -- Bitcoin network.
\item
In Section \ref{s4}, we conclude with further extensions of the PAM, and a comparison of the BPAM and the SBM.
There a list of open problems are presented.
\end{itemize}
The proofs of the lemma are given in the Supplementary Material.

\section{Models and main results}
\label{s2}
\subsection{Preferential attachment models}
\label{s21}

The PAM was proposed by Barab\'{a}si and Albert \cite{BA99}, capturing the idea that popular nodes get more attracted than less popular ones. 
It is closely related to the Yule model \cite{Yule25}, the Matthew effect \cite{Merton68}, the Price model \cite{Price} and the Chinese restaurant process \cite{Aldous85}.
Roughly speaking, it generates the graph in a sequential way by attaching a new node to existing ones with probability proportional to the degree of those nodes. 
Thus, the resulting degree distribution has a power law.
As pointed out in \cite{BRST01}, the original definition of the PAM is mathematically ambiguous.
A more precise description is given as follows.
\begin{itemize}
\item
Start with two nodes linked by $m \ge 1$ edges.
\item
When adding a new node, add edges one a time, with the second and subsequent edges performing preferential attachment using the updated degrees.
\end{itemize}
This construction allows to reduce the PAM for any $m \ge 1$ to $m = 1$ by collapsing nodes $(k - 1)m + 1, \ldots, km$ to node $k$.
See also \cite{BB01, M02, BBCR, BR03} for various extensions of the PAM. 
In the sequel, we focus on the undirected linear PAM.

An important variant of the PAM was introduced in \cite{BR04}, which is referred to as the {\em Linearized Chord Diagram} (LCD) or the {\em Bollob\'{a}s-Riordan model}.
It starts with a single node, labelled $1$, with a self-loop.
For $i \ge 1$, node $i+1$ is attached to the graph by the following rule:
\begin{equation}
\label{eq:LCD}
\def\arraystretch{1.5}
\mathbb{P}(i+1 \sim v)  = \left\{ \begin{array}{rcl}
\frac{d_i(v)}{2i+1} & \mbox{for}
& v \le i, \\ 
\frac{1}{2i+1} & \mbox{for} & v = i+1,
\end{array}\right.
\end{equation}
where $d_i(v)$ is the degree of the node $v$ at time $i$.
It can be shown that the graph under the dynamics \eqref{eq:LCD} has a degree distribution $P(k)$ proportional to $k^{-3}$.
A more general model was proposed \cite{BO04}, allowing the degree power as a parameter.
Precisely, the attachment rule is given by
\begin{equation}
\label{eq:BO}
\def\arraystretch{1.5}
\mathbb{P}(i+1 \sim v)  = \left\{ \begin{array}{rcl}
 \frac{d_i(v) + a - 1}{(a+1)i + a} & \mbox{for}
& v \le i, \\ 
\frac{a}{(a+1)i+a} & \mbox{for} & v = i+1.
\end{array}\right.
\end{equation}
By taking $a = 1$, we get the LCD model. 
The degree distribution $P(k)$ of the Buckley-Osthus model is proportional to $k^{-2 -a}$.
A nice property of the Buckley-Osthus model is the exchangeability, i.e. the graph distribution does not depend on the order of nodes added. 
Given a graph of $n$ nodes and edges, the Buckley-Osthus likelihood is 
\begin{equation}
\label{eq:likelihoodBO}
L^{BO}_n(a): = \frac{\prod_v a^{(d(v) - 1)}}{a(2a+1)(3a+2) \cdots (na+n-1)},
\end{equation}
where $a^{(n)}: = \prod_{k = 0}^{n-1} (a+k)$ is the {\em Pochhammer rising factorial}, with the convention $a^{(0)}: = 1$,
and $d(v)$ is the total degree of the node $v$.
One can fit the Buckley-Osthus model by MLE, and a natural question is whether the MLE is consistent.
\subsection{Block preferential attachment model}
\label{s22}

In view of the stochastic blockmodel, it is natural to add a community structure to the PAM. 
It allows for extra parameters to model the incentive of attachment between different communities.
Precisely, the attachment rule is as follows.
\begin{itemize}[itemsep = 3 pt]
\item
There are $K$ communities $C_1, \ldots, C_K$.
Each added node belongs to community $C_j$ with probability $\pi_j$, $1 \le j \le K$ with $\sum_{j = 1}^K \pi_j = 1$.
The memberships of nodes are independent.
\item
$(\gamma_{kl}; \, 1 \le k, l \le K)$ is a symmetric matrix representing the interaction between communities. 
The probability that node $i+1$ is attached to $v$ is proportional to $\gamma_{kl} d_i(v)$ with $i+1 \in C_k$, $v \in C_l$.
\end{itemize}
In this case,
 \begin{equation}
 \label{eq:BPA}
 \def\arraystretch{1.5}
\mathbb{P}(i+1 \sim v)  = \left\{ \begin{array}{rcl}
\frac{\gamma_{kl} d_i(v)}{\sum_{v'} \gamma_{kl'}d_i(v') + \gamma_{kk}} & \mbox{for}
& v \le i, \\ 
\frac{\gamma_{kk}}{\sum_{v'} \gamma_{kl'}d_i(v') + \gamma_{kk}} & \mbox{for} & v = i+1.
\end{array}\right.
\end{equation}
By taking $\gamma_{kl}$'s all equal, we get the LCD model.
It is easy to see that the attachment probability \eqref{eq:BPA} is homogenous in $(\gamma_{kl}; \, 1 \le k, l \le K)$.
One can take for instance $\gamma_{11} = 1$ for normalization.

The main difference between the PAM and the BPAM is that the latter is not exchangeable.
That is, the graph distribution depends on the order of nodes added. 
To see this, we need some notations.
For $n \ge 1$ and $1 \le i, j \le K$, let
\begin{itemize}[itemsep = 3 pt]
\item
$T^n_j$ be the number of nodes belonging to community $C_j$ up to time $n$;
\item
$N^n_j$ be the sum of degrees of nodes belonging to $C_j$ up to time $n$;
\item
$M_{ij}^n$ be the number of edges linking a node in $C_i$ with one in $C_j$.
\end{itemize}
For $1 \le k \le n$, let $l_k$ be the membership of node $k$, i.e. $l_k = j$ if node $k$ belongs to community $j$.
The BPAM likelihood is given by
\begin{equation}
\label{eq:BPAlikelihood}
L^{BPA}_n({\pmb \pi}, {\pmb \gamma}): = 
\prod_{j = 1}^K \pi_j^{T_j^n}  \frac{\prod_{i, j = 1}^K \gamma_{ij}^{M_{ij}^n} \prod_{v} (d(v) -1) !}{\prod_{k = 1}^n \left(\sum_{j = 1}^K \gamma_{l_k j} N_j^{k-1} + \gamma_{l_k l_k} \right)}.
\end{equation}
It can be seen from the likelihood \eqref{eq:BPAlikelihood} that the denominator is node order dependent, while the numerator is not.
Thus, the graph likelihood depends on the whole history of the network expansion, not merely the final configuration.
A first question is whether the MLE of $L_n^{BPA}$ is consistent.

By letting $t_k$ be the membership of the node attached by node $k$, the BPAM likelihood \eqref{eq:BPAlikelihood} can be rewritten as
\begin{equation}
\label{eq:BPAlikelihood2}
L^{BPA}_n({\pmb l}; \, {\pmb \pi}, {\pmb \gamma}) = 
\prod_{k = 1}^n \pi_{l_k}  \frac{\prod_{k=1}^n \gamma_{l_k t_k} \prod_{v} (d(v) -1) !}{\prod_{k = 1}^n \left(\sum_{j = 1}^K \gamma_{l_k j} N_j^{k-1} + \gamma_{l_k l_k} \right)}.
\end{equation}
Often the memberships of nodes are not observed, and this leads to considering the marginal likelihood:
\begin{equation}
\label{eq:marginalPA}
G_n^{BPA}({\pmb \pi}, {\pmb \gamma}): = \sum_{{\pmb l} \in \{1, \dots, K\}^n} L_n^{BPA}({\pmb l}; \, {\pmb \pi}, {\pmb \gamma}).
\end{equation}
Note that the marginal likelihood \eqref{eq:marginalPA} is invariant under relabeling the names of communities, i.e. $G_n^{BPA}(\Pi{\pmb \pi}, \Pi {\pmb \gamma} \Pi^T) = G_n^{BPA}({\pmb \pi}, {\pmb \gamma})$ for $\Pi$ a permutation matrix. 
It is also interesting to ask whether the MLE of $G_n^{BPA}$ is consistent.
\subsection{Theoretical results}
\label{s23}

Now we present the main results -- consistency of the MLE for both the Buckley-Osthus model and the BPAM.
Recall the Buckley-Osthus likelihood from \eqref{eq:likelihoodBO}. 
We restrict to the domain $\mathcal{D}: = [\varepsilon, M] \subset (0, \infty)$, and consider the following scaled log-likelihood:
\begin{multline}
\label{eq:loglikeOB}
\ell^{BO}_n(a):= \frac{1}{n} \Bigg( \sum_{v \, : \, d(v) \ge 2} \sum_{k = 0}^{d(v)-2} \log(a+k)  \\
-  \sum_{k = 1}^n \log\left(a + \frac{k-1}{k}\right) \Bigg).
\end{multline}
The MLE of the Buckley-Osthus model is defined by $\widehat{a}^{BO}_n : = \argmax_{a \in \mathcal{D}} \ell^{BO}_n(a)$.
Denote $a_0$ to be the true value of $a$.
Our first result shows the consistency of $\widehat{a}^{BO}_n$.
\begin{theorem}
\label{thm:consistency}
Assume that $a_0 \in \mathcal{D}$.
Then $\widehat{a}^{BO}_n \rightarrow a_0$ almost surely as $n \rightarrow \infty$
\end{theorem}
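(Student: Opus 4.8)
The plan is to run the classical argument for consistency of an M-estimator. Since $\ell^{BO}_n(a_0)$ does not depend on $a$, the maximiser $\widehat{a}^{BO}_n$ is unchanged if one replaces $\ell^{BO}_n$ by the centred objective $\Phi_n(a):=\ell^{BO}_n(a)-\ell^{BO}_n(a_0)$, and this centred version has a tractable limit. I would (i) show that $\Phi_n\to\Phi$ almost surely and uniformly on the compact set $\mathcal D$, for a deterministic continuous $\Phi$; (ii) show that $\Phi$ is uniquely maximised at $a_0$, where $\Phi(a_0)=0$; and (iii) conclude by the standard argmax argument. As noted in the introduction, the likelihood \eqref{eq:likelihoodBO} is not of exponential type, so both the limit $\Phi$ and its identifiability have to be read off from the attachment dynamics rather than quoted from general theory. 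Throughout, let $p_j(a)$ be the limiting fraction of nodes of degree $j$ (so $p_j(a)\propto j^{-2-a}$, cf. Section~\ref{s21}) and $p_{\ge j}(a):=\sum_{i\ge j}p_i(a)$ its tail.

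\textbf{Step (i): uniform convergence.} Interchanging the order of the double sum in \eqref{eq:loglikeOB} gives
\[
\Phi_n(a)=\sum_{k\ge 0}\log\frac{a+k}{a_0+k}\cdot\frac{N_{\ge k+2}(n)}{n}-\frac1n\sum_{k=1}^n\log\frac{a+(k-1)/k}{a_0+(k-1)/k},
\]
where $N_{\ge j}(n):=\#\{v:\,d(v)\ge j\}$. The first ingredient is the classical degree-distribution result for the Buckley--Osthus graph, $N_{\ge j}(n)/n\to p_{\ge j}(a_0)$ almost surely for each fixed $j$, which follows from the recursion satisfied by $\mathbb E[N_j(n)]$ together with Azuma--Hoeffding concentration and Borel--Cantelli. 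The second is uniformity over $\mathcal D$, which rests on the elementary bound $\bigl|\log\frac{a+k}{a_0+k}\bigr|\le C_{\mathcal D}/(1+k)$, valid for all $a\in\mathcal D$, and on the exact edge count $\sum_j j\,N_j(n)=2n$ (with $N_j(n):=\#\{v:\,d(v)=j\}$), which forces $N_{\ge m}(n)\le 2n/m$; multiplying these two bounds, the tail $\sum_{k>K}$ in the display is $O\bigl(\sum_{k>K}k^{-2}\bigr)$ uniformly in $n$, while the truncated head is a finite sum converging uniformly in $a$, and the second sum converges uniformly to $\log\frac{a+1}{a_0+1}$ by Ces\`aro averaging. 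This yields $\sup_{a\in\mathcal D}|\Phi_n(a)-\Phi(a)|\to0$ a.s., with $\Phi(a)=\sum_{k\ge 0}\log\frac{a+k}{a_0+k}\,p_{\ge k+2}(a_0)-\log\frac{a+1}{a_0+1}$, which is finite and continuous on $\mathcal D$.

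\textbf{Step (ii): identification.} Writing $v_k$ for the target of node $k$'s attachment, a direct computation with the Pochhammer symbols gives $L^{BO}_{n+1}(a)/L^{BO}_n(a)=\mathbb P_a(v_{n+1}\mid\mathcal F_n)$, the one-step attachment probability \eqref{eq:BO}; hence $L^{BO}_n(a)$ is exactly the probability under $a$ of the observed attachment sequence, $\Lambda_n:=L^{BO}_n(a)/L^{BO}_n(a_0)$ is a likelihood-ratio martingale under $\mathbb P_{a_0}$, and $\Phi_n(a)=\tfrac1n\log\Lambda_n$. Decompose $\log\Lambda_n=-\sum_{k=1}^{n-1}D_k+M_n$, where $D_k:=\mathrm{KL}\bigl(\mathbb P_{a_0}(v_{k+1}\in\cdot\mid\mathcal F_k)\,\big\|\,\mathbb P_a(v_{k+1}\in\cdot\mid\mathcal F_k)\bigr)\ge0$ and $M_n$ is a $\mathbb P_{a_0}$-martingale. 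The one-step log-probability ratios are bounded (the factors $\frac{d+a_0-1}{d+a-1}$ and $\frac{(a+1)k+a}{(a_0+1)k+a_0}$ lie in fixed compact subsets of $(0,\infty)$), hence so are the increments of $M_n$, and $M_n/n\to0$ a.s.; grouping nodes by degree and using $N_d(k)/k\to p_d(a_0)$ gives $D_k\to\mathrm{KL}(q\,\|\,r)$ a.s., where $q_d:=(d+a_0-1)p_d(a_0)/(a_0+1)$ and $r_d:=(d+a-1)p_d(a_0)/(a+1)$ are probability distributions on $\{1,2,\dots\}$ (both normalisations follow from the mean-degree identity $\sum_d d\,p_d(a_0)=2$). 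Therefore $\Phi(a)=-\mathrm{KL}(q\,\|\,r)\le0$, and this is strict for $a\neq a_0$: $q=r$ would force $(d+a_0-1)(a+1)=(d+a-1)(a_0+1)$ for every $d\ge1$, impossible on comparing coefficients of $d$ unless $a=a_0$. Thus $\Phi$ attains its maximum, $0$, only at $a_0$.

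\textbf{Step (iii) and the main obstacle.} On the almost-sure event of (i)--(ii), given $\eta>0$ put $\delta:=-\tfrac12\sup\{\Phi(a):a\in\mathcal D,\ |a-a_0|\ge\eta\}>0$; for all large $n$ the uniform convergence gives $\Phi_n(a_0)>-\delta>\Phi_n(a)$ whenever $a\in\mathcal D$ and $|a-a_0|\ge\eta$, whence $|\widehat{a}^{BO}_n-a_0|<\eta$, and since $\eta>0$ is arbitrary, $\widehat{a}^{BO}_n\to a_0$ a.s. I expect the main difficulty to be step (ii): one must uncover the likelihood-ratio-martingale structure hidden in the preferential-attachment dynamics, which is what identifies the deterministic limit $\Phi$ as the negative relative-entropy rate $-\mathrm{KL}(q\|r)$, and then check that this rate does not degenerate, i.e. that $a$ is identifiable from the one-step attachment kernel. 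The uniform tail estimate in step (i) is a lesser but genuine technical point, for which isolating the exact identity $\sum_j j\,N_j(n)=2n$ is the useful trick.
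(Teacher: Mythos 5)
Your proof is correct, and while it rests on the same two pillars as the paper's argument --- the almost-sure degree-distribution limit of Proposition \ref{thm:convp} and the tail bound $Z^n_{>k+1}/n\le 2/(k+2)$ extracted from $\sum_j jZ^n_j=2n$ --- the identification step takes a genuinely different route. The paper (Lemma \ref{prop:analysis}) works with the \emph{derivative}: it proves uniform convergence of $\ell'_n$ to $\ell'_\infty$ on $\{a>\varepsilon\}$, uses the algebraic recursions \eqref{eq:recp}--\eqref{eq:tail} to collapse $\ell'_\infty(a)$ to $\frac{a-a_0}{(a_0+1)(a+1)}\sum_{k\ge0}\frac{k-1}{k+a}\,p_{k+1}$ with strictly negative second factor, and then uses the resulting sign pattern of $\ell'_n$ to trap $\widehat a^{BO}_n$ in $[a_0-\eta,a_0+\eta]$. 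You instead work with the centred objective itself, recognize $L^{BO}_n(a)$ as the exact path probability of the attachment sequence (this is correct: the denominator of \eqref{eq:likelihoodBO} is the product of the step normalizers $(a+1)(k-1)+a$, and a node of final degree $D$ contributes $a^{(D-1)}$ whether or not it is born with a self-loop), and identify the limit as the negative relative-entropy rate $-\mathrm{KL}(q\,\|\,r)$ of the one-step kernels, with strict negativity for $a\ne a_0$ coming from identifiability of the kernel. Your route buys a conceptual explanation of why $a_0$ is the unique maximizer, replacing the ad hoc derivative computation by the information inequality; the paper's route buys the explicit formula for $\ell'_\infty$, which it then reuses in the proof of asymptotic normality (Theorem \ref{thm:asympnormal}). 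Two small points you should make explicit: (a) the limits produced by your steps (i) and (ii) must be checked to coincide, which they do via the identity $q_d=p_{>d}=(d+a_0-1)p_d/(a_0+1)$, itself a restatement of \eqref{eq:recp}--\eqref{eq:tail}; and (b) the Ces\`aro convergence $\frac1n\sum_k D_k\to\mathrm{KL}(q\,\|\,r)$ needs the same uniform tail control over large degrees that you isolate in step (i), since $D_k$ is an infinite sum over $d$ --- the bounds $\sum_{d>D}N_d(k)\le 2k/D$ and $\sum_{d\le D}dN_d(k)/k\to\sum_{d\le D}dp_d\uparrow 2$ suffice. Neither point is a gap, only a detail to be written out.
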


Moreover, we can prove the asymptotic normality of $\widehat{a}^{BO}_n$.
\begin{theorem}
\label{thm:asympnormal}
As $n \rightarrow \infty$,
\begin{equation}
\sqrt{n}(\widehat{a}^{BO}_n - a_0) \stackrel{(d)}{\longrightarrow} \mathcal{N}\left(0, \frac{\sigma^2}{\beta^2}\right),
\end{equation}
where 
\begin{equation}
\label{eq:sigma}
\sigma^2 : = \sum_{k \ge 0} \frac{p_{>k+1}}{(a_0+k)^2} - \frac{2}{a_0+1} \sum_{k \ge 0} \frac{p_{>k+1}}{a_0+k} + \frac{1}{(a_0+1)^2},
\end{equation}
and 
\begin{equation}
\label{eq:beta}
\beta: = \sum_{k \ge 0} \frac{p_{>k+1}}{(a_0+k)^2} - \frac{1}{(a_0+1)^2},
\end{equation}
with 
\begin{equation}
\label{eq:pkdef}
p_k : = \frac{(a_0+1) a_0^{(k-1)}}{(2a_0+1)^{(k)}},
\end{equation}
and $p_{>k} : = \sum_{j = k+1}^{\infty} p_j$.
\end{theorem}

Now let us move onto the BPAM whose likelihood is given by \eqref{eq:BPAlikelihood}.
We consider the following scaled log-likelihood:
\begin{multline}
\label{eq:BPAlog}
\ell^{BPA}_n({\pmb \pi}, {\pmb \gamma}): = \frac{1}{n} \Bigg( \sum_{j = 1}^K T_j^n \log \pi_j  + \sum_{i, j = 1}^K M^n_{ij} \log \gamma_{ij} \\
- \sum_{k = 1}^n \log \left(\sum_{j = 1}^K \gamma_{l_k j} \frac{N^{k-1}_j}{k} \right) \Bigg).
\end{multline}
We write $\ell^{BPA}_n({\pmb \pi}, {\pmb \gamma}) = \ell^{BPA}_n({\pmb \pi}) + \ell^{BPA}_n({\pmb \gamma})$ where
\begin{equation}
\label{eq:BPApi}
\ell^{BPA}_n({\pmb \pi}): = \frac{1}{n} \sum_{j = 1}^K T_j^n \log \pi_j 
\end{equation}
and
\begin{multline}
\label{eq:BPAgamma}
\ell^{BPA}_n({\pmb \gamma}): = 
\frac{1}{n} \Bigg(  \sum_{i, j = 1}^K M^n_{ij} \log \gamma_{ij} \\
- \sum_{k = 1}^n \log \left(\sum_{j = 1}^K \gamma_{l_k j} \frac{N^{k-1}_j}{k} \right) \Bigg).
\end{multline}
The MLE $(\widehat{\pmb \pi}, \widehat{\pmb \gamma})$ is defined by $\widehat{\pmb \pi}: = \argmax_{{\pmb \pi} \in \mathcal{D}}\ell^{BPA}_n({\pmb \pi})$ over the set of probability distributions $\mathcal{D}$, and 
$\widehat{\pmb \gamma}: = \argmax_{{\pmb \gamma} \in \mathcal{S}} \ell^{BPA}_n({\pmb \gamma})$ over the set of symmetric stochastic matrices $\mathcal{S}$.
Denote $({\pmb \pi}^0, {\pmb \gamma}^0)$ to the true parameter values.
The main result is the consistency of $(\widehat{\pmb \pi}, \widehat{\pmb \gamma})$.
\begin{theorem}
\label{thm:consistencyBPA}
We have $\widehat{\pmb \pi} \rightarrow {\pmb \pi}^0$, $\widehat{\pmb \gamma} \rightarrow {\pmb \gamma}^0$ almost surely as $n \rightarrow \infty$.
\end{theorem}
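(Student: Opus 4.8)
The plan is to use the decomposition $\ell_n^{HPA}(\pmb\pi,\pmb\gamma)=\ell_n^{HPA}(\pmb\pi)+\ell_n^{HPA}(\pmb\gamma)$ from \eqref{eq:BPApi}--\eqref{eq:BPAgamma}, which decouples the estimation of $\pmb\pi$ and $\pmb\gamma$. For $\pmb\pi$, maximizing $\tfrac1n\sum_j T_j^n\log\pi_j$ over the simplex gives the empirical frequencies $\widehat\pi_j=T_j^n/n$; since the memberships $(l_k)_{k\ge1}$ are i.i.d.\ with law $\pmb\pi^0$, the strong law of large numbers yields $\widehat\pi_j\to\pi_j^0$ a.s. The substance of the theorem is therefore $\widehat{\pmb\gamma}\to\pmb\gamma^0$, and for this I would follow the M-estimation strategy used for preferential-attachment MLEs in \cite{GV17, WWDR}: (i) identify the almost sure limits of the random statistics appearing in $\ell_n^{HPA}(\pmb\gamma)$; (ii) deduce a deterministic limit $\ell_\infty^{HPA}(\pmb\gamma)$, with uniform convergence on the effective domain; (iii) show $\ell_\infty^{HPA}$ has a unique maximizer at $\pmb\gamma^0$; and (iv) conclude by the standard $\argmax$ argument.

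Step (i). The statistics entering $\ell_n^{HPA}(\pmb\gamma)$ are the community degree sums $N_j^n$, the cross-community edge counts $M_{ij}^n$, and the labels $l_k$. I claim $N_j^n/n\to\nu_j$ and $M_{ij}^n/n\to\mu_{ij}$ a.s.\ for constants $\nu_j,\mu_{ij}$ depending only on $(\pmb\pi^0,\pmb\gamma^0)$. The vector $(N_1^n,\dots,N_K^n)$ is a generalized P\'olya-type urn: conditionally on the past, the expected increment of $N_j^n$ is a smooth function of the normalized vector $N^{n-1}/n$ up to an $O(1/n)$ correction from the self-loop term $\gamma_{l_kl_k}$ in \eqref{eq:BPA}, so $N^n/n$ satisfies a stochastic approximation recursion whose associated ODE has a unique attracting equilibrium $\pmb\nu$; standard stochastic-approximation / urn results then give the a.s.\ convergence, and the convergence of $M_{ij}^n/n$ follows similarly. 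The limits satisfy the balance relations dictated by \eqref{eq:BPA} --- for instance $\sum_j\nu_j=2$, and for $i\neq j$ an identity of the form $\mu_{ij}=\pi_i^0\gamma_{ij}^0\nu_j/\Gamma_i^0+\pi_j^0\gamma_{ij}^0\nu_i/\Gamma_j^0$ with $\Gamma_m^0:=\sum_l\gamma_{ml}^0\nu_l$ --- and these identities are the key algebraic input to step (iii).

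Step (ii). Writing $\Gamma_m(\pmb\gamma):=\sum_j\gamma_{mj}\nu_j$, the candidate limit is
\[ \ell_\infty^{HPA}(\pmb\gamma)=\sum_{i,j=1}^K\mu_{ij}\log\gamma_{ij}-\sum_{m=1}^K\pi_m^0\log\Gamma_m(\pmb\gamma). \]
The first sum converges by step (i). For the last term of \eqref{eq:BPAlog}, I would split $\tfrac1n\sum_{k=1}^n\log\bigl(\sum_j\gamma_{l_kj}N_j^{k-1}/k\bigr)$ into the term with $N_j^{k-1}/k$ replaced by $\nu_j$ --- which converges by the SLLN for the i.i.d.\ labels $l_k$ --- plus a remainder that vanishes because $N_j^{k-1}/k\to\nu_j$ a.s.\ (a convergent sequence has vanishing Ces\`aro average of its deviations), and all of this holds uniformly over $\pmb\gamma$ in any compact subset of $\mathcal S$ with entries bounded away from $0$. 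Since $M_{ij}^n\log\gamma_{ij}\to-\infty$ as $\gamma_{ij}\downarrow0$ and eventually $M_{ij}^n>0$ a.s., the maximizer $\widehat{\pmb\gamma}$ lies in such a compact set for all large $n$, so restricting to it is harmless.

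Step (iii) and conclusion. I would show $\ell_\infty^{HPA}(\pmb\gamma^0)-\ell_\infty^{HPA}(\pmb\gamma)\ge0$ with equality iff $\pmb\gamma=\pmb\gamma^0$, by using the balance relations of step (i) to rewrite this difference as a nonnegative combination of Kullback--Leibler-type terms --- morally $\sum_m\pi_m^0$ times the divergence between the attachment profile $(\gamma_{mj}^0\nu_j/\Gamma_m^0)_j$ of community $m$ and its counterpart under $\pmb\gamma$ --- which vanishes exactly when these profiles all agree; since the memberships are observed there is no permutation ambiguity (unlike for the marginal likelihood \eqref{eq:marginalPA}) and the normalization built into $\mathcal S$ fixes the overall scale, so this forces $\pmb\gamma=\pmb\gamma^0$. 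Then (iv) is routine: uniform convergence of $\ell_n^{HPA}(\cdot)$ on the effective compact domain together with the unique maximizer of the continuous function $\ell_\infty^{HPA}$ implies $\widehat{\pmb\gamma}\to\pmb\gamma^0$ a.s. The main obstacles are step (i) --- proving a.s.\ convergence of the urn-type quantities $N_j^n/n$ and $M_{ij}^n/n$ and checking that the governing stochastic approximation has a unique stable equilibrium --- and step (iii) --- producing a manifestly nonnegative expression for $\ell_\infty^{HPA}(\pmb\gamma^0)-\ell_\infty^{HPA}(\pmb\gamma)$, which is nontrivial because $\ell_\infty^{HPA}$ is a difference of a concave and a convex function and hence not globally concave, so the precise identities among $\pmb\nu$, $\pmb\mu$ and $\pmb\gamma^0$ must be exploited. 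I expect step (iii) to absorb most of the effort.
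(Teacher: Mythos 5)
Your overall architecture coincides with the paper's: the $\pi$/$\gamma$ decoupling with $\widehat\pi_j=T_j^n/n$ handled by the multinomial SLLN, the almost sure limits $N_j^n/n\to p_j^0$ and $M_{ij}^n/n\to\theta_{ij}^0$ established via the urn-type recursions of Proposition \ref{prop:dadada} (your balance relation for $\mu_{ij}$ is exactly \eqref{eq:fixedtheta}), passage to the deterministic limit \eqref{eq:BPAloginfty}, and the standard $\argmax$ conclusion. The one place you genuinely diverge is the step you correctly flag as the crux, the uniqueness of the maximizer of $\ell_\infty^{HPA}$. The paper proceeds by first-order conditions: it computes $\nabla\ell_\infty^{HPA}$, verifies via \eqref{eq:fixedtheta} that ${\pmb\gamma}^0$ is a critical point, and then shows the stationarity system has a unique solution by eliminating the off-diagonal entries through \eqref{eq:ijii} and reducing to a system in $(\gamma_{ii})_i$ --- an explicit algebraic/monotonicity argument that is only fully carried out for $K\le 3$ (the general case is asserted to be ``similar''). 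Your route instead writes $\ell_\infty^{HPA}({\pmb\gamma}^0)-\ell_\infty^{HPA}({\pmb\gamma})$ as $\sum_m\pi_m^0$ times a Kullback--Leibler divergence between the limiting attachment profiles $(\gamma_{mj}^0p_j^0/\Gamma_m^0)_j$ and $(\gamma_{mj}p_j^0/\Gamma_m({\pmb\gamma}))_j$; vanishing of all these divergences forces $\gamma_{mj}=c_m\gamma_{mj}^0$, and symmetry of ${\pmb\gamma}$ forces the $c_m$ to be equal, giving uniqueness up to scale for arbitrary $K$ in one stroke. This is cleaner and more general than the paper's computation, at the price of some bookkeeping you should carry out explicitly: the factor-of-two conventions in $\sum_{i,j}\theta_{ij}^0\log\gamma_{ij}$ (off-diagonal versus diagonal terms) must be matched so that the first sum really equals $\sum_m\pi_m^0\sum_jq_{mj}\log\gamma_{mj}$ up to a ${\pmb\gamma}$-independent constant, and the $O(1/n)$ self-loop weight must be checked to drop out of the limiting profiles. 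Both approaches, as you note, only identify ${\pmb\gamma}^0$ up to a positive multiple, with the normalization of $\mathcal S$ fixing the scale.
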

\subsection{Roadmap to the proofs}
\label{s24}

Let us start with the consistency of $\widehat{a}^{BO}_n$. 
For $k \ge 1$, let $Z_k^n$ be the number of nodes of degree $k$, and $Z^n_{>k} = \sum_{j = k+1}^{\infty} Z^n_j$ be the number of nodes of degree greater than $k$.
The log-likelihood \eqref{eq:loglikeOB} can be expressed as
\begin{equation}
\label{eq:loglikeOB2}
\ell^{BO}_n(a) = \sum_{k \ge 0}\frac{Z^n_{>k+1}}{n}  \log(a+k)  - \frac{1}{n} \sum_{k = 1}^n \log\left( a + \frac{k-1}{k}  \right).
 \end{equation}
The key idea to prove Theorem \ref{thm:consistency} is that $Z^n_k/n$ converges almost surely to $p_k$, which forms a probability distribution.
\begin{proposition}
\label{thm:convp}
For $k \ge 1$, $Z^n_k/n \rightarrow p_k$ almost surely, where
$p_k$ is defined by \eqref{eq:pkdef}.
\end{proposition}
\begin{proof}
For $k \ge 1$, let $N^n_k : = \mathbb{E}Z^n_k$. We prove that $N^n_k/n \rightarrow p_k$ as $n \rightarrow \infty$.
The idea is to establish the recursion for $N_k^n$. 
For $k = 1$,
\begin{equation}
\label{eq:N1}
N_1^{n+1} - N^n_1 = \frac{(a_0+1)n}{(a_0+1)n + a_0} - \frac{a_0 N_1^n}{(a_0+1)n + a_0},
\end{equation}
where the first term on the r.h.s. comes from attaching the new node to any existing one, and the second term is due to loss of a degree one node being attached by the new node.
For $k = 2$,
\begin{multline}
\label{eq:N2}
N_2^{n+1} - N_2^n = \frac{a_0}{(a_0+1)n + a_0} + \frac{a_0 N_1^n}{(a_0+1)n+a_0} \\
- \frac{(a_0+1)N_2^n}{(a_0+1)n+a_0},
\end{multline}
where the first term on the r.h.s. is due to creation of a loop by the new node, the second term due to creation of a degree two node by attaching the new node to a degree one node, and the third term due to loss of a degree two node being attached by the new node.
Similarly, for $k \ge 3$,
\begin{equation}
\label{eq:N3}
N^{n+1}_k - N^n_k = \frac{(k+a_0-2) N^{n}_{k-1}}{(a_0+1)n + a_0} - \frac{(k+a_0-1) N^n_k}{(a_0+1)n + a_0}.
\end{equation}
From \eqref{eq:N1}, we get
\begin{equation*}
N_1^{n+1} = \left( 1 - \frac{a_0}{(a_0+1)n + a_0}\right) N_1^n + \frac{(a_0+1)n}{(a_0+1)n + a_0}.
\end{equation*}
By Lemma 4.1.2 in \cite{Durrett07}, 
$N_1^n/n \rightarrow 1/(1 + \frac{a_0}{a_0+1}) = p_1$ as $n \rightarrow \infty$. 
Similarly, we have $N^k_n/n \rightarrow p_k$ for all $k$. 
During a time epoch, the number of nodes with degree $k$ differs at most two.
A standard argument by combining the Azuma-Hoeffding inequality and the Borel-Cantelli lemma yields the desired result.
\end{proof}

This leads to considering the limit of \eqref{eq:loglikeOB2}: 
\begin{equation}
\label{eq:limit}
\ell^{BO}_{\infty}(a): = \sum_{k \ge 0} p_{>k+1} \log(a+k) - \log(a+1),
\end{equation}
where $p_{> k+1} : = \sum_{j = k+2}^{\infty} p_j$. 
Next we show that $\ell^{BO}_{\infty}$ attains its unique maximum at $a_0$, from which we derive the consistency of $\widehat{a}^{BO}_n$.
\begin{lemma}
\label{prop:analysis}
The function $\ell_{\infty}^{BO}(\cdot)$ has a unique maximum at $a_0$. Moreover, for any $\varepsilon > 0$,
\begin{equation}
\label{eq:ellest}
\sup_{a > \varepsilon} |\ell_n^{BO'}(a) - \ell_{\infty}^{BO'}(a)| \rightarrow 0 \quad a.s.
\end{equation}
\end{lemma}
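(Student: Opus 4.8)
The plan is to handle the two assertions of Lemma~\ref{prop:analysis} separately: the uniqueness of the maximizer of $\ell_\infty$ will come from a single application of Jensen's inequality, and the uniform convergence $\ell_n'\to\ell_\infty'$ from Scheff\'e's lemma together with a Ces\`aro estimate. I would begin with two algebraic identities for $(p_k)$. From \eqref{eq:tail}, replacing $k$ by $k+2$, one gets $p_{>k+1}=\frac{k+2+2a_0}{a_0+1}p_{k+2}$ for $k\ge0$, and eliminating $p_{k+2}$ through the recursion \eqref{eq:recp} yields
\begin{equation}
p_{>k+1}=\frac{a_0+k}{a_0+1}\,p_{k+1}\qquad(k\ge0).
\end{equation}
Together with $\sum_{k\ge0}p_{k+1}=1$, $\sum_{k\ge0}k\,p_{k+1}=\sum_{i\ge1}i p_i-\sum_{i\ge1}p_i=2-1=1$ (using $\sum_{i\ge1}i p_i=2$), and $\sum_{k\ge0}p_{>k+1}=\sum_{i\ge1}(i-1)p_i=1$, this identity shows in particular that $(p_{>k+1})_{k\ge0}$ is a probability distribution on $\{0,1,2,\dots\}$.

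Using $\sum_{k\ge0}p_{>k+1}=1$ I would then write $\ell_\infty(a)-\ell_\infty(a_0)=\sum_{k\ge0}p_{>k+1}\log\frac{a+k}{a_0+k}-\log\frac{a+1}{a_0+1}$ and apply Jensen's inequality to the concave function $\log$ against the distribution $(p_{>k+1})_{k\ge0}$:
\begin{equation}
\sum_{k\ge0}p_{>k+1}\log\frac{a+k}{a_0+k}\;\le\;\log\Bigl(\sum_{k\ge0}p_{>k+1}\frac{a+k}{a_0+k}\Bigr).
\end{equation}
The identity of the previous paragraph gives $p_{>k+1}/(a_0+k)=p_{k+1}/(a_0+1)$, so $\sum_{k\ge0}p_{>k+1}\frac{a+k}{a_0+k}=\frac{1}{a_0+1}\sum_{k\ge0}(a+k)p_{k+1}=\frac{a+1}{a_0+1}$; hence $\ell_\infty(a)\le\ell_\infty(a_0)$ for every $a>0$. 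Equality in Jensen forces $k\mapsto(a+k)/(a_0+k)$ to be constant on the support $\{0,1,2,\dots\}$ of $(p_{>k+1})$, which happens only when $a=a_0$, giving the claimed unique maximum. (Finiteness of $\ell_\infty$ and legitimacy of term-by-term differentiation on $\{a\ge\varepsilon\}$ follow since $p_{>k+1}=O(k^{-(1+a_0)})$, so the series defining $\ell_\infty$ and its formal derivative converge uniformly there.)

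For the second assertion, differentiate \eqref{eq:loglikeOB2} and \eqref{eq:limit} and split
\begin{equation}
\ell_n'(a)-\ell_\infty'(a)=\sum_{k\ge0}\Bigl(\tfrac{Z^n_{>k+1}}{n}-p_{>k+1}\Bigr)\tfrac{1}{a+k}-\Bigl(\tfrac1n\sum_{k=1}^n\tfrac{1}{a+1-1/k}-\tfrac{1}{a+1}\Bigr).
\end{equation}
The last bracket is deterministic, and $\bigl|\tfrac{1}{a+1-1/k}-\tfrac{1}{a+1}\bigr|=\tfrac{1/k}{(a+1-1/k)(a+1)}\le\tfrac{1}{\varepsilon^2 k}$ for $a>\varepsilon$, so it is $O(\varepsilon^{-2}n^{-1}\log n)$ uniformly in $a>\varepsilon$. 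For the random term, the key observation is the exact mass identity $\sum_{k\ge0}Z^n_{>k+1}/n=\tfrac1n\sum_v(d(v)-1)=\tfrac1n(2n-n)=1=\sum_{k\ge0}p_{>k+1}$; on the almost-sure event of Proposition~\ref{thm:convp} (where $Z^n_j/n\to p_j$ for every $j$, hence $Z^n_{>k+1}/n=1-\sum_{j=1}^{k+1}Z^n_j/n\to p_{>k+1}$ for every $k$), Scheff\'e's lemma gives $\sum_{k\ge0}|Z^n_{>k+1}/n-p_{>k+1}|\to0$ almost surely. Since $\tfrac{1}{a+k}\le\tfrac1\varepsilon$ for $a>\varepsilon$, the random term is at most $\tfrac1\varepsilon\sum_{k\ge0}|Z^n_{>k+1}/n-p_{>k+1}|$ uniformly in $a>\varepsilon$, which vanishes almost surely. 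Adding the two estimates yields \eqref{eq:ellest}.

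The delicate step is the uniqueness in the second paragraph: rather than trying to establish global concavity of $\ell_\infty$ (which is subtle, as $\sum_k k\,p_{>k+1}=\infty$ when $a_0\le1$ obstructs a crude control of $\ell_\infty''$), one should exploit that the identity $p_{>k+1}=\tfrac{a_0+k}{a_0+1}p_{k+1}$ makes the Jensen bound tight precisely at $a=a_0$, which settles uniqueness directly; discovering and verifying this identity is the main work. The second assertion is then routine once the exact mass identity $\sum_{k\ge0}Z^n_{>k+1}/n=1$ is noticed, as this is precisely the hypothesis needed to invoke Scheff\'e's lemma.
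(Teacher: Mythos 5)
Your proof is correct, but both halves take a genuinely different route from the paper's. For the uniqueness of the maximizer, the paper differentiates and, using the same identities \eqref{eq:recp}, \eqref{eq:tail} and $\sum_k kp_k=2$ that you invoke, factors the derivative as $\ell'_\infty(a)=\frac{a-a_0}{(a_0+1)(a+1)}\sum_{k\ge0}\frac{k-1}{k+a}p_{k+1}$ and shows the sum is strictly negative; you instead observe that $(p_{>k+1})_{k\ge0}$ is itself a probability distribution and that $p_{>k+1}/(a_0+k)=p_{k+1}/(a_0+1)$ makes the Jensen bound collapse to $\log\frac{a+1}{a_0+1}$, with the strict concavity of $\log$ giving uniqueness. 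Both arguments are valid and rest on the same algebraic identities; the paper's factorization buys strictly more, namely that $\ell'_\infty>0$ on $[\varepsilon,a_0)$ and $\ell'_\infty<0$ on $(a_0,M]$, and it is precisely this sign information (not merely the unique maximum) that the paper then feeds into the proof of Theorem \ref{thm:consistency}. Your Jensen argument does not yield the sign of $\ell'_\infty$ away from $a_0$, so if one wanted to reuse your lemma downstream one would either have to supplement it or replace the paper's derivative-based consistency argument by a sup-norm argmax argument on $\ell_n$ itself; as a proof of the lemma as stated, however, it is complete. For the uniform convergence \eqref{eq:ellest}, the paper bounds $Z^n_{>k+1}/n\le 2/(k+2)$ and does a three-way truncation of the series, whereas you note the exact mass identity $\sum_{k\ge0}Z^n_{>k+1}/n=1=\sum_{k\ge0}p_{>k+1}$ and apply Scheff\'e's lemma to get $\ell^1$-convergence of the tail sequence directly; this is cleaner, and your verification of the mass identity (total degree $2n$ over $n$ nodes) and of the deterministic $O(n^{-1}\log n)$ term is accurate.
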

\begin{proof}[Proof of Theorem \ref{thm:consistency}]
Fix $\eta > 0$. According to Lemma \ref{prop:analysis}, there exists $m_{\eta} > 0$ such that
\begin{equation*}
\ell'_{\infty}(a) > m_{\eta} \mbox{ for } a \in [\varepsilon, a_0 - \eta]
\end{equation*}
and 
\begin{equation*}
\ell'_{\infty}(a) < - m_{\eta} \mbox{ for } a \in [a_0 + \eta, M],
\end{equation*}
and a.s. for $n$ large enough, $\sup_{\mathcal{D}} |\ell'_n(a) - \ell'_{\infty}(a)| < m_{\eta}/2$.
As a consequence,
\begin{equation*}
\ell'_{n}(a) > m_{\eta}/2 \mbox{ for } a \in [\varepsilon, a_0 - \eta] 
\end{equation*}
and
\begin{equation*}
\ell'_{n}(a) < - m_{\eta}/2 \mbox{ for } a \in [a_0 + \eta, M].
\end{equation*}
This implies that $\widehat{a}^{BO}_n \in [a_0 - \eta, a_0 + \eta]$ a.s. 
Since $\eta$ can be taken arbitrarily small, we prove the consistency of $\widehat{a}^{BO}_n$.
\end{proof}

We proceed to proving the asymptotic normality of $\widehat{a}^{BO}_n$.
To this end, let $v^{(k)}$ be the node being attached at time $k$.
The scaled log-likelihood can be written as
\begin{equation*}
\ell_n(a) = \frac{1}{n}\sum_{k= 1}^n F_k(a), 
\end{equation*}
where $F_k(a): = \log(a+ d_k(v^{(k)}) - 1) - \log(a+1-k^{-1})$
with the convention that $d_k(v^{(k)}) = 1$ if a self-loop is formed at epoch $k$.
Therefore,
\begin{equation*}
\ell_n'(a) = \frac{1}{n} \sum_{k = 1}^n f_k(a),
\end{equation*}
where
\begin{equation}
\label{eq:fka}
f_k(a) : = \frac{1}{a + d_k(v^{(k)}) - 1} - \frac{1}{a+1-k^{-1}}.
\end{equation}
Note that $\ell'_n(\widehat{a}^{BO}_n) = 0$.
By Taylor expanding $f_k$'s, we get
\begin{equation*}
0 = \sum_{k=1}^n f_k(\widehat{a}^{BO}_n) = \sum_{k=1}^n f_k(a_0) + (\widehat{a}_n^{BO} - a_0) \sum_{k = 1}^n f_k'(a^{\star}),
\end{equation*}
where $a^{\star} \in (a_0, \widehat{a}^{BO}_n)$. 
Consequently,
\begin{equation}
\label{eq:diffa}
\sqrt{n} (\widehat{a}^{BO}_n - a_0) 
= \left(-\frac{1}{n} \sum_{k=1}^n f_k'(a^{\star}) \right)^{-1} \left(\frac{1}{\sqrt{n}} \sum_{k=1}^n f_k(a_0) \right).
\end{equation}
The proof of Theorem \ref{thm:asympnormal} boils down to the following two lemmas.

\begin{lemma}
\label{prop:fk}
As $n \rightarrow \infty$,
\begin{equation}
\label{eq:convfk}
\frac{1}{\sqrt{n}} \sum_{k=1}^n f_k(a_0) \stackrel{(d)}{\longrightarrow} \mathcal{N}(0, \sigma^2),
\end{equation}
where $\sigma^2$ is defined by \eqref{eq:sigma}.
\end{lemma}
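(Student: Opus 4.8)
The plan is to apply a martingale central limit theorem to the sum $\frac{1}{\sqrt{n}}\sum_{k=1}^n f_k(a_0)$. The first step is to show that, with respect to the natural filtration $\mathcal{F}_k$ generated by the first $k$ steps of the Buckley-Osthus dynamics, the conditional expectation $\mathbb{E}[f_k(a_0)\mid\mathcal{F}_{k-1}]$ is small. Indeed, conditionally on $\mathcal{F}_{k-1}$, the node $v^{(k)}$ attached at time $k$ is chosen (together with the self-loop option) according to \eqref{eq:BO} with $a=a_0$, so
\begin{equation*}
\mathbb{E}\left[\frac{1}{a_0+d_k(v^{(k)})-1}\,\Big|\,\mathcal{F}_{k-1}\right] = \sum_{v\le k-1}\frac{d_{k-1}(v)+a_0-1}{(a_0+1)(k-1)+a_0}\cdot\frac{1}{a_0+d_{k-1}(v)} + \frac{a_0}{(a_0+1)(k-1)+a_0}\cdot\frac{1}{a_0},
\end{equation*}
and a direct computation shows the leading term cancels $\frac{1}{a_0+1-k^{-1}}$ up to an $O(1/k)$ correction (in fact the numerator $d_{k-1}(v)+a_0-1$ almost telescopes against the denominator $a_0+d_{k-1}(v)$, leaving $\sum_v \tfrac{1}{a_0+d_{k-1}(v)}$ of lower order divided by the normalization). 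Hence I would write $f_k(a_0) = \xi_k + r_k$ where $\xi_k := f_k(a_0) - \mathbb{E}[f_k(a_0)\mid\mathcal{F}_{k-1}]$ is a martingale difference sequence and $\sum_{k=1}^n r_k = O(\log n)$ almost surely, so $\frac{1}{\sqrt n}\sum r_k \to 0$.

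Next I would verify the two hypotheses of the martingale CLT (e.g. the version in Hall--Heyde) for the array $\{\xi_k/\sqrt{n}\}_{1\le k\le n}$. For the conditional Lindeberg condition, note that $|\xi_k|$ is bounded by a universal constant (both $\tfrac{1}{a_0+d_k(v^{(k)})-1}\le \tfrac{1}{a_0}$ and $\tfrac{1}{a_0+1-k^{-1}}$ are bounded), so $\frac{1}{n}\sum_k \mathbb{E}[\xi_k^2 \mathbf{1}_{|\xi_k|>\delta\sqrt n}\mid\mathcal{F}_{k-1}] = 0$ for $n$ large. The substantive step is the convergence of the conditional variance: I must show
\begin{equation*}
\frac{1}{n}\sum_{k=1}^n \mathbb{E}\left[\xi_k^2\mid\mathcal{F}_{k-1}\right] \longrightarrow \sigma^2 \quad\text{a.s.}
\end{equation*}
Expanding $\xi_k^2 = f_k(a_0)^2 - (\mathbb{E}[f_k(a_0)\mid\mathcal{F}_{k-1}])^2 + o(1)$-type terms, and using again the explicit attachment law \eqref{eq:BO} to compute $\mathbb{E}[f_k(a_0)^2\mid\mathcal{F}_{k-1}]$, the dominant contribution is
\begin{equation*}
\sum_{v\le k-1}\frac{d_{k-1}(v)+a_0-1}{(a_0+1)(k-1)}\left(\frac{1}{a_0+d_{k-1}(v)} - \frac{1}{a_0+1}\right)^2 + (\text{self-loop term}),
\end{equation*}
which I would rewrite as a sum over degree classes weighted by $Z^{k-1}_j/(k-1)$. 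By Proposition~\ref{thm:convp}, $Z^{k-1}_j/(k-1)\to p_j$, and averaging over $k$ (Cesàro) converts this into $\sum_{j\ge 1}\frac{j+a_0-1}{a_0+1}\big(\frac{1}{a_0+j}-\frac{1}{a_0+1}\big)^2 p_j$ plus the self-loop contribution $\frac{1}{a_0+1}(\frac{1}{a_0}-\frac{1}{a_0+1})^2\cdot 0$ (the self-loop frequency being $O(1/k)$ per step but summing to a genuine $\tfrac{1}{a_0+1}$-type weight from the $\tfrac{a_0}{(a_0+1)(k-1)}$ factor — this bookkeeping needs care). Re-indexing via the identities $p_{>k+1} = \tfrac{k+2+2a_0}{a_0+1}p_{k+2}$ and $\tfrac{j+a_0-1}{a_0+1}p_j$ relations from \eqref{eq:recp}--\eqref{eq:tail}, together with $\sum_k k p_k = 2$, I expect the expression to collapse exactly to the three-term formula \eqref{eq:sigma} for $\sigma^2$.

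The main obstacle will be this last identification: matching the combinatorial sum over degree classes with the specific closed form \eqref{eq:sigma}, and in particular correctly accounting for the self-loop events and the lower-order telescoping remainder $r_k$ so that no $O(1)$ contribution is lost. A secondary technical point is justifying the almost-sure Cesàro convergence uniformly enough to pass from $Z^{k-1}_j/(k-1)\to p_j$ (which holds for each fixed $j$) to convergence of the full infinite sum; this I would handle with the uniform tail bound $Z^n_{>k+1}/n \le \tfrac{2}{k+2}$ already established in the proof of Lemma~\ref{prop:analysis}, splitting the $j$-sum at a large cutoff $J$ exactly as in \eqref{eq:supest}. Once the conditional variance limit is in hand, the martingale CLT delivers \eqref{eq:convfk}, and combined with the companion lemma on $-\tfrac1n\sum f_k'(a^\star)\to\beta$ and the decomposition \eqref{eq:diffa}, Slutsky's theorem yields Theorem~\ref{thm:asympnormal}.
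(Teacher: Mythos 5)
Your overall strategy (Hall--Heyde martingale CLT with a bounded-increments Lindeberg check and a variance condition) is the paper's strategy, but your first step contains an error that, as written, would sink the argument. You compute $\mathbb{E}\bigl[\tfrac{1}{a_0+d_k(v^{(k)})-1}\mid\mathcal{F}_{k-1}\bigr]$ with the term $\tfrac{1}{a_0+d_{k-1}(v)}$ against the weight $\tfrac{d_{k-1}(v)+a_0-1}{(a_0+1)(k-1)+a_0}$; this is an off-by-one in the degree. Under your formula the ``remainder'' is $\tfrac{1}{N}\sum_{v\le k-1}\tfrac{1}{a_0+d_{k-1}(v)}$ with $N=(a_0+1)(k-1)+a_0$, which is of order a \emph{constant} per step (the sum has $k-1$ terms each bounded below by a constant over $N\asymp k$), not $O(1/k)$; so $\sum_k r_k=\Theta(n)$, not $O(\log n)$, and $\tfrac{1}{\sqrt n}\sum r_k$ would not vanish. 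The fix is that no remainder exists at all: $F_k(a)$ is (up to the deterministic normalization) the log of the conditional probability of the $k$-th attachment, so $f_k(a_0)$ is a conditional score and has conditional mean \emph{exactly} zero. Concretely, the correct term is $\tfrac{1}{a_0+d_{k-1}(v)-1}$, the numerators cancel exactly, and
\begin{equation*}
\mathbb{E}\left[\frac{1}{a_0+d_k(v^{(k)})-1}\,\Big|\,\mathcal{F}_{k-1}\right]=\frac{(k-1)+1}{(a_0+1)(k-1)+a_0}=\frac{1}{a_0+1-k^{-1}},
\end{equation*}
so $\bigl(\sum_{k\le n}f_k(a_0)\bigr)_n$ is already a martingale and your decomposition $f_k=\xi_k+r_k$ is unnecessary.

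On the variance condition you diverge from the paper: you aim at the conditional variance $\tfrac1n\sum_k\mathbb{E}[\xi_k^2\mid\mathcal{F}_{k-1}]\to\sigma^2$, whereas the paper verifies $\tfrac1n\sum_k f_k^2(a_0)\to\sigma^2$ directly. The latter is much lighter: expanding the square gives three sums $S_1+S_2-2S_3$, and $S_1=\sum_{j\ge0}\tfrac{Z^n_{>j+1}/n}{(a_0+j)^2}$ (because the multiset of values $d_k(v^{(k)})-1$ over $k\le n$ is exactly $\bigcup_v\{0,\dots,d(v)-2\}$), so Proposition \ref{thm:convp} plus the tail bound $Z^n_{>j+1}/n\le 2/(j+2)$ yields the closed form \eqref{eq:sigma} with no reindexing. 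Your route is also admissible for the Hall--Heyde theorem, and the identification you flag as the ``main obstacle'' does close: one finds $\tfrac1n\sum_k\mathbb{E}[f_k^2(a_0)\mid\mathcal{F}_{k-1}]\to\tfrac{1}{a_0+1}\sum_{j\ge1}\tfrac{p_j}{a_0+j-1}-\tfrac{1}{(a_0+1)^2}$, and the identities \eqref{eq:recp}--\eqref{eq:tail} give $\sum_{k\ge0}\tfrac{p_{>k+1}}{(a_0+k)^2}=\tfrac{1}{a_0+1}\sum_{j\ge1}\tfrac{p_j}{a_0+j-1}$, matching \eqref{eq:sigma} after noting that $\sigma^2=\beta$ in disguise of the score-information identity. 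But since you leave exactly this step unproved, and your remainder bound is incorrect as stated, the proposal as written does not yet constitute a proof.
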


\begin{lemma}
\label{prop:fkprime}
As $n \rightarrow \infty$,
\begin{equation}
\frac{1}{n} \sum_{k = 1}^n f_k'(a^{\star}) \longrightarrow - \beta \quad \mbox{in probability},
\end{equation}
where $\beta$ is defined by \eqref{eq:beta}.
\end{lemma}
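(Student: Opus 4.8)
The plan is to reduce the statement at the random evaluation point $a^\star$ to the deterministic point $a_0$, at which the required limit is essentially already computed inside the proof of Lemma \ref{prop:fk}. First I would differentiate \eqref{eq:fka} twice:
\begin{equation*}
f_k'(a) = \frac{1}{(a+1-k^{-1})^2} - \frac{1}{(a + d_k(v^{(k)}) - 1)^2}, \qquad f_k''(a) = \frac{2}{(a + d_k(v^{(k)}) - 1)^3} - \frac{2}{(a+1-k^{-1})^3}.
\end{equation*}
Since $d_k(v^{(k)}) \ge 1$ and $k \ge 1$, both denominators are at least $a \ge \varepsilon$ for $a \in \mathcal{D}=[\varepsilon,M]$, so $\sup_{a \in \mathcal{D}} |f_k''(a)| \le 4/\varepsilon^3 =: C$ uniformly in $k$. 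By the mean value theorem this gives, for every $a \in \mathcal{D}$,
\begin{equation*}
\Bigl| \tfrac{1}{n} \textstyle\sum_{k=1}^n f_k'(a) - \tfrac{1}{n} \sum_{k=1}^n f_k'(a_0) \Bigr| \le C\,|a - a_0|.
\end{equation*}
Now $a^\star$ lies between $a_0$ and $\widehat{a}^{BO}_n$, both in $\mathcal{D}$, and $\widehat{a}^{BO}_n \to a_0$ almost surely by Theorem \ref{thm:consistency}; hence $a^\star \to a_0$ almost surely, so the right-hand side above, evaluated at $a=a^\star$, tends to $0$ in probability. It therefore suffices to identify the almost sure limit of $\frac{1}{n}\sum_{k=1}^n f_k'(a_0)$.

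For that, I would simply observe that
\begin{equation*}
\frac{1}{n} \sum_{k=1}^n f_k'(a_0) = \frac{1}{n} \sum_{k=1}^n \frac{1}{(a_0+1-k^{-1})^2} - \frac{1}{n} \sum_{k=1}^n \frac{1}{(a_0 + d_k(v^{(k)}) - 1)^2} = S_2 - S_1,
\end{equation*}
where $S_1,S_2$ are exactly the quantities appearing in the proof of Lemma \ref{prop:fk}. There it is shown, via the identity $S_1 = \sum_{j \ge 0} (Z^n_{>j+1}/n)/(a_0+j)^2$ together with Proposition \ref{thm:convp}, that $S_1 \to \sum_{j\ge 0} p_{>j+1}/(a_0+j)^2$ almost surely, and by elementary Ces\`aro-type analysis that $S_2 \to 1/(a_0+1)^2$. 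Consequently $\frac1n\sum_{k=1}^n f_k'(a_0) \to 1/(a_0+1)^2 - \sum_{j\ge 0} p_{>j+1}/(a_0+j)^2 = -\beta$ almost surely, with $\beta$ as in \eqref{eq:beta}; combining this with the preceding paragraph and the triangle inequality yields $\frac1n\sum_{k=1}^n f_k'(a^\star) \to -\beta$ in probability.

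The only genuinely new ingredient relative to Lemma \ref{prop:fk} is the handling of the random point $a^\star$, and that is exactly what the uniform bound on $f_k''$ over $\mathcal{D}$ takes care of; no martingale or central-limit machinery is needed here. The hard part, such as it is, is the bookkeeping behind the identity $S_1 = \sum_{j\ge 0}(Z^n_{>j+1}/n)/(a_0+j)^2$ (counting, for each final degree level, how many times a node is the one attached to), but this has already been carried out when establishing the analogous convergence for $S_1$ in the proof of Lemma \ref{prop:fk}, so I would just invoke it. I do not foresee any obstacle that requires tools beyond those already developed in Appendix \ref{AppendixA}.
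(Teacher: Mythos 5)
Your proof is correct, and it follows the same overall decomposition as the paper: split $\frac1n\sum_k f_k'(a^\star)$ into the value at $a_0$ plus a correction controlled by $|a^\star-a_0|$, kill the correction via consistency of $\widehat a^{BO}_n$, and identify the limit at $a_0$ using the convergences of $S_1$ and $S_2$ already established in the proof of Lemma \ref{prop:fk}. The one place you genuinely diverge is the treatment of the $a_0$-term: the paper writes $f_k'(a)=-f_k^2(a)-2f_k(a)\frac{1}{a+1-k^{-1}}$ and assembles the limit $-\beta$ by recombining $-\sigma^2$ with the limit of $S_3-S_2$, whereas you differentiate $f_k$ directly to get $\frac1n\sum_k f_k'(a_0)=S_2-S_1\to \frac{1}{(a_0+1)^2}-\sum_{j\ge0}\frac{p_{>j+1}}{(a_0+j)^2}=-\beta$. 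Your route is shorter and avoids both the algebraic identity for $f_k'$ and the cancellation bookkeeping between $\sigma^2$ and the cross term (it also makes it transparent why $-\beta$ is exactly the two ``pure square'' pieces of $-\sigma^2$ plus twice the cross-term limit). You are also more explicit than the paper about the correction term: the paper asserts $|T_2|\le C|a^\star-a_0|$ ``by standard analysis,'' while you supply the uniform bound $\sup_{a\in\mathcal D}|f_k''(a)|\le 4/\varepsilon^3$ and the mean value theorem, which is exactly the right justification. No gaps.
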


Now we consider the BPAM. 
Note that $\widehat{\pmb \pi}$ is the MLE of the Multinomial$(1, {\pmb \pi})$ distribution. 
It follows from standard exponential family theory \cite{Brown86} that $\widehat{\pmb \pi}$ is consistent and asymptotically normal.
The main difficulty of Theorem \ref{thm:consistencyBPA} is to prove that $\widehat{\pmb \gamma} \rightarrow {\pmb \gamma}^0$.
The idea is similar to that of Theorem \ref{thm:consistency}.
We prove that $M^n_{ij}/n  \rightarrow \theta^0_{ij}$ and $N_j^{n-1}/n \rightarrow p_j^0$ almost surely, with $\sum_{i,j = 1}^K \theta^0_{ij} = 1$ and $\sum_{j = 1}^K p^0_j = 2$.
\begin{proposition}
\label{prop:dadada}
For $1 \le i, j \le K$, $N^{n-1}_j/n \rightarrow p^0_j$ and $M^n_{ij}/n \rightarrow \theta^0_{ij}$ almost surely, where $(p_j^0; \, 1 \le j \le K)$ satisfies 
\begin{equation}
\label{eq:fixedp}
p^0_j \left(1- \sum_{l = 1}^K \frac{\pi_l^0 \gamma^0_{lj}}{\sum_{k=1}^K \gamma^0_{lk} p^0_k} \right) = \pi_j^0,
\end{equation}
and
\begin{equation}
\label{eq:fixedtheta}
\theta^0_{ij}: = \left\{ \begin{array}{ccl}
\frac{\pi_i^0 \gamma_{ij}^0 p_j^0}{\sum_{k=1}^K \gamma_{ik}^0 p_k^0} +  \frac{\pi_j^0 \gamma_{ji}^0 p_i^0}{\sum_{k=1}^K \gamma_{jk}^0 p_k^0} & \mbox{for} & i \ne j, \\[10 pt]
\frac{\pi^0 \gamma_{ii}^0 p_i^0}{\sum_{k=1}^K \gamma_{ik}^0 p_k^0} & \mbox{for} & i = j.
\end{array}\right.
\end{equation}
\end{proposition}
\begin{proof}
The idea is to establish a recursion for $N_j^n$, $1 \le j \le K$. 
Denote $(\mathcal{F}_n; \, n \ge 1)$ to be the natural filtration of the attachment process.
We have
\begin{multline}
\label{eq:Nrecursion}
\mathbb{E}(N_j^{n+1}|\mathcal{F}_n) - N^n_j = \sum_{l \ne j} \frac{\pi_l^0 \gamma^0_{lj}N_j^n }{\sum_k \gamma^0_{lk} N^n_k + \gamma^0_{ll}} \\
+ \pi^0_j \left[\sum_{l \ne j} \frac{\gamma^0_{lj} N^n_l}{\sum_k \gamma^0_{jk} N_k^n + \gamma^0_{jj}} + \frac{2 \gamma^0_{jj}(1+N_j^n)}{\sum_k \gamma^0_{jk} N_k^n + \gamma^0_{jj}}  \right],
\end{multline}
where the first term on the r.h.s. includes the contributions from a new node not in $C_j$ attached to an existing one in $C_j$, 
the second term from a new node in $C_j$ attached to an existing one not in $C_j$, and the third term from a new node in $C_j$ attached to an existing one in $C_j$.
Rearranging the terms in \eqref{eq:Nrecursion} yields
\begin{multline*}
\mathbb{E}(N_j^{n+1}|N_j^n) = \Bigg[1 + \sum_{l \ne j} \frac{\pi_l^0 \gamma^0_{lj}}{\sum_k \gamma^0_{lk} N^n_k + \gamma^0_{ll}} \\
+ \frac{2 \pi_j^0 \gamma^0_{jj}}{\sum_k \gamma^0_{jk} N_k^n + \gamma^0_{jj}}  \Bigg] N_j^n + \pi_j^0 \frac{\sum_{l \ne j}\gamma^0_{lj}N^n_l + 2 \gamma^0_{jj}}{\sum_k \gamma^0_{jk} N_k^n + \gamma^0_{jj}}.
\end{multline*}
Now we explain how \eqref{eq:fixedp} comes from. 
One expects $N^n_k \sim p_k^0 n$. Then the r.h.s. of the above expression is approximately
\begin{multline*}
\left[1 + \frac{1}{n} \sum_{l \ne j} \frac{\pi_l^0 \gamma^0_{lj}}{\sum_k \gamma^0_{lk}p_k^0} + \frac{1}{n} \frac{2 \pi_j^0 \gamma_{jj}^0}{\sum_{k} \gamma_{jk}^0 p_k^0} \right] N_j^n \\ + \pi_j^0 \frac{\sum_{l \ne j} \gamma^0_{lj} p^0_l}{\sum_k \gamma_{jk}^0 p_k^0} 
= \left[ 1 + \frac{1}{n} \sum_{l} \frac{\pi^0_l \gamma^0_{lj}}{\sum_k \gamma^0_{lk} p^0_k} \right] N_j^n + \pi_j^0.
\end{multline*}
Now by Lemma 4.1.2 in \cite{Durrett07}, $(p_j^0; \,1 \le j \le K)$ satisfies \eqref{eq:fixedp}. 
These facts can be justified by a routine argument as in Section 3, \cite{J13}.
The full details are left for the readers.

Now we move to the convergence of $M^n_{ij}/n$. Similarly, we get the recursion
\begin{multline}
\mathbb{E}(M_{ij}^{n+1}|\mathcal{F}_n) - M^n_{ij} \\
= \left\{ \begin{array}{ccl}
\frac{\pi_i^0 \gamma_{ij}^0 N_j^n}{\sum_k \gamma_{ik}^0 N^n_k + \gamma^0_{ii}} + \frac{\pi_j^0 \gamma_{ji}^0 N_i^n}{\sum_k \gamma_{jk}^0 N^n_k + \gamma^0_{jj}}& \mbox{for} & i \ne j, \\[10 pt]
\frac{\pi_i^0 \gamma_{ii}^0 (1+N_i^n)}{\sum_{k} \gamma_{ik}^0 N^n_k + \gamma^0_{ii}} & \mbox{for} & i = j.
\end{array}\right.
\end{multline}
It is easy to see from the asymptotics of $N^n_j$ that $M^n_{ij}/n$ converges to $\theta_{ij}^0$ satisfying \eqref{eq:fixedtheta}.
\end{proof}

The above result yields the limit of \eqref{eq:BPAgamma}:
\begin{equation}
\label{eq:BPAloginfty}
\ell_{\infty}^{BPA}({\pmb \gamma}):= 
\sum_{i,j = 1}^K \theta_{ij}^0 \log \gamma_{ij} - \sum_{i = 1}^K \pi_i^0 \log\left(\sum_{j=1}^K \gamma_{ij} p_j^0 \right).
\end{equation}
Note that $\ell^{BPA}_{\infty}$ is homogeneous of order $0$, i.e. $\ell^{BPA}_{\infty}(\theta {\pmb \gamma}) = \ell^{BPA}_{\infty}({\pmb \gamma})$ for each $\theta > 0$. 
It suffices to prove that $\ell_{\infty}^{BPA}$ attains its unique maximum at the equivalent class $[{\pmb \gamma}^0]$, 
from which the consistency of $\widehat{\pmb \gamma}$ follows. 
\begin{lemma}
\label{lem:bbb}
The function $\ell_{\infty}^{BPA}(\cdot)$ has a unique maximum at ${\pmb \gamma}_0$ up to a constant multiple.
\end{lemma}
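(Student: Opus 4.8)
The plan is to establish concavity of $\ell_{\infty}^{BPA}$ along suitable directions after accounting for its homogeneity, then combine with the fact, already established above, that ${\pmb \gamma}_0$ is a stationary point. Since $\ell_{\infty}^{BPA}(a{\pmb \gamma}) = \ell_{\infty}^{BPA}({\pmb \gamma})$ for all $a>0$, uniqueness can only be up to a positive scalar, and the natural move is to reparametrize. First I would substitute $u_{ij} := \log \gamma_{ij}$ (for $i\le j$, with $u_{ji}=u_{ij}$) so that $\gamma_{ij} = e^{u_{ij}}$, and rewrite
\begin{equation}
\label{eq:reparam}
\ell_{\infty}^{BPA}\!\left(e^{\pmb u}\right) = \sum_{i,j=1}^K \theta_{ij}^0 u_{ij} - \sum_{i=1}^K \pi_i^0 \log\!\left(\sum_{j=1}^K p_j^0\, e^{u_{ij}}\right).
\end{equation}
The first sum is linear in ${\pmb u}$, and each term $\log(\sum_j p_j^0 e^{u_{ij}})$ is the log-sum-exp function, which is convex; hence $\ell_{\infty}^{BPA}(e^{\pmb u})$ is \emph{concave} in ${\pmb u}$. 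The homogeneity becomes the statement that $\ell_{\infty}^{BPA}(e^{\pmb u})$ is invariant under adding a constant to every coordinate of ${\pmb u}$, i.e. along the all-ones direction ${\pmb 1}$.

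Next I would argue strict concavity transverse to ${\pmb 1}$. The log-sum-exp map $\phi_i({\pmb u}) := \log(\sum_j p_j^0 e^{u_{ij}})$ has Hessian equal to $\mathrm{diag}({\pmb q}^{(i)}) - {\pmb q}^{(i)}({\pmb q}^{(i)})^T$ where $q^{(i)}_j = p_j^0 e^{u_{ij}}/\sum_k p_k^0 e^{u_{ik}}$ is a probability vector; this Hessian is positive semidefinite with kernel exactly $\mathrm{span}\{{\pmb 1}\}$ (since all $q^{(i)}_j > 0$). Because the variable $u_{ij}$ with $i$ fixed ranges over the full row and each row appears in exactly the term $\phi_i$ (plus, via symmetry, $\phi_j$), summing these Hessians over $i$ shows that the Hessian of $-\ell_{\infty}^{BPA}(e^{\pmb u})$ is positive definite on the orthogonal complement of the single global invariance direction — here one must be slightly careful to track how the symmetry constraint $u_{ij}=u_{ji}$ couples rows, but the conclusion is that the only flat direction is the overall shift ${\pmb u} \mapsto {\pmb u} + c{\pmb 1}$. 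Consequently $\ell_{\infty}^{BPA}(e^{\pmb u})$ is strictly concave modulo that one-dimensional invariance, so any critical point is the unique global maximum up to the shift, which in the original coordinates is the scaling ${\pmb \gamma} \mapsto a{\pmb \gamma}$. Since we showed $\nabla \ell_{\infty}^{BPA}({\pmb \gamma}_0) = {\pmb 0}$, it follows that ${\pmb \gamma}_0$ attains the unique maximum up to a constant multiple.

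Finally I would tie this back to consistency: fixing the normalization $\gamma_{11}=1$ pins down the scaling freedom, so on the set $\mathcal{S}$ of symmetric stochastic matrices the limiting objective has a genuinely unique maximizer ${\pmb \gamma}^0$; then, exactly as in the proof of Theorem \ref{thm:consistency}, one invokes Proposition \ref{prop:dadada} together with a uniform convergence statement $\sup_{\pmb \gamma \in \mathcal{S}} |\ell_n^{HPA}({\pmb \gamma}) - \ell_{\infty}^{HPA}({\pmb \gamma})| \to 0$ a.s.\ (which follows since $M_{ij}^n/n$ and $N_j^{n-1}/n$ converge a.s.\ and the remaining terms are controlled by standard analysis) to conclude $\widehat{\pmb \gamma} \to {\pmb \gamma}^0$ a.s. The main obstacle I anticipate is not the concavity itself but handling the symmetry constraint $\gamma_{ij}=\gamma_{ji}$ cleanly when computing the Hessian, and verifying that the \emph{only} degenerate direction is the global scaling — in particular ruling out spurious flat directions created by the fact that each off-diagonal parameter feeds into two of the log-sum-exp terms; a careful bookkeeping of which coordinates enter which $\phi_i$, using that all $p_j^0>0$ and all $\pi_i^0>0$, should resolve this.
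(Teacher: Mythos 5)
Your argument is correct, and it takes a genuinely different route from the paper's. After noting that ${\pmb\gamma}_0$ is a stationary point, the paper works directly with the first-order conditions $\nabla\ell^{BPA}_{\infty}({\pmb\gamma})={\pmb 0}$: it eliminates the off-diagonal entries via \eqref{eq:ijii}, reduces to an algebraic system in the diagonal entries $(\gamma_{ii})$, and proves uniqueness by a dimension count plus a monotonicity argument in an auxiliary parameter $\lambda$ — carried out explicitly only for $K=3$, with the remaining cases asserted to be similar; interiority of the maximizer is obtained separately from the blow-up of $\ell^{BPA}_{\infty}$ at the boundary. You instead pass to $u_{ij}=\log\gamma_{ij}$ and exploit that the objective is linear minus a $\pi^0$-weighted sum of log-sum-exp terms, hence concave, with the Hessian kernel computed exactly. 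The symmetry issue you flag resolves cleanly: vanishing of the quadratic form forces each row of a symmetric direction $v$ to be constant, $v_{ij}=c_i$ (using $\pi_i^0>0$ and $q^{(i)}_j>0$, the latter because $p_j^0\ge\pi_j^0>0$ by \eqref{eq:fixedp}), and the constraint $v_{ij}=v_{ji}$ then forces $c_i=c_j$ for all $i\ne j$, so the kernel is exactly the global shift, i.e.\ the scaling ${\pmb\gamma}\mapsto a{\pmb\gamma}$; there are no spurious flat directions. Strict concavity modulo that single direction makes the stationary point ${\pmb\gamma}_0$ the unique maximizer up to a constant multiple, with no case analysis and no need for the boundary argument. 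The trade-off favors your route: it is uniform in $K$, it shows directly that the critical point is a maximum rather than inferring this from boundary behavior, and it sidesteps the paper's unproven claim that "all other cases can be proceeded in a similar way." The only hypotheses worth stating explicitly are $\pi_i^0>0$ for all $i$ (so every log-sum-exp term contributes) and $K\ge 2$ (for $K=1$ the statement is vacuous since every point is a scaling of every other). Your closing paragraph on uniform convergence and consistency is not part of this lemma, but it matches how the paper subsequently uses it.
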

All the remaining proofs are deferred to the Supplementary Material.

\section{Empirical results}
\label{s3}
\subsection{Buckley-Osthus model}

Theorems \ref{thm:consistency} and \ref{thm:asympnormal} show that the MLE of the Buckley-Osthus model is consistent and asymptotically normal. 
Here we present some simulation experiments to illustrate statistical inference for finite samples.
We pick three different values for the parameter, $a_0 \in \{0.5, 1.0, 2.0\}$.
For each parameter value, we generate $200$ realizations from the Buckley-Osthus dynamics \eqref{eq:BO}
with sample sizes $n = 100, 200, 500, 1000$.

We fit the Buckley-Osthus model by MLE, and calculate the MLE $\widehat{a}^{BO}$ via the gradient ascent algorithm.
So for each parameter value and sample size, there are $200$ estimates.
Figure \ref{fig:BObox} gives the boxplots of these MLEs for each parameter $a_0 \in \{0,5, 1.0, 2.0\}$ with sample sizes $n = 100. 200, 500, 1000$.
Table \ref{table:BO} summarizes a few statistics of the MLEs in different experiment settings.
\begin{figure}[ht]
\vskip 0.2in
\begin{center}
\centerline{\includegraphics[width=1 \columnwidth]{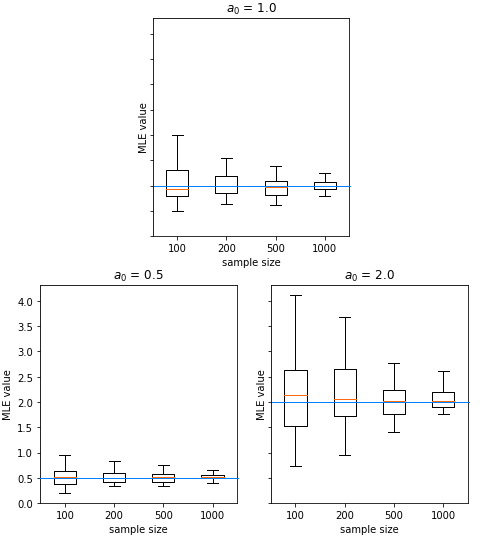}}
\caption{Boxplots of the MLEs for the Buckley-Osthus model with different parameter values and sample sizes.}
\label{fig:BObox}
\end{center}
\vskip -0.2in
\end{figure}

\begin{table}[ht]
\begin{center}
\begin{tabular}{ |l|c|c|c|c|} 
 \hline
$a_0 = 0.5$ & $n = 100 $  & $n = 200 $ & $n = 500 $ &  $n = 1000 $  \\ \hline
Mean & 0.543 & 0.511 & 0.510  & 0.506  \\ \hline
Median & 0.526 & 0.516 & 0.515  & 0.511  \\ \hline
Std & 0.200 & 0.114 & 0.100 & 0.062   \\ \hline
 \hline
$a_0 = 1.0$ & $n = 100 $  & $n = 200 $ & $n = 500 $ &  $n = 1000 $  \\ \hline
Mean & 1.125 & 1.098 & 0.977  & 1.011  \\ \hline
Median & 0.936 & 0.997 & 0.977  & 0.990  \\ \hline
Std & 0.645 & 0.365 & 0.182 & 0.123   \\ \hline
 \hline
$a_0 = 2.0$ & $n = 100 $  & $n = 200 $ & $n = 500 $ &  $n = 1000 $  \\ \hline
Mean & 2.540 & 2.354 & 2.081  & 2.085 \\ \hline
Median & 2.130 & 2.060 & 2.027  & 2.025  \\ \hline
Std & 1.708 & 1.008 & 0.487 & 0.251   \\ \hline
\end{tabular}
\end{center}
\caption{The mean, median and standard deviation of the MLEs for the Buckley-Osthus model with different parameter values and sample sizes.}
\label{table:BO}
\end{table}

It can be seen from Figure \ref{fig:BObox} and Table \ref{table:BO} that for $n \gtrapprox 500$, the mean/median of the MLEs are close to the true parameter values.
For each parameter value, the standard deviation of the MLEs decreases as the sample size increases.
Given the sample size, the standard deviation of the MLEs increases as the parameter value gets larger.
These observations agree with the theoretical findings in Section \ref{s23}.
\subsection{Block preferential attachment model}
Theorem \ref{thm:consistencyBPA} proves that the MLE of the BPAM is consistent. 
Here we corroborate this result with some simulations. 
We take $K = 2$ with $(\pi_1, \pi_2) = (0.3, 0.7)$, and $\gamma_{12} = \gamma_{21} = 0.5$ and $\gamma_{22} = 1.5$.
For this parameter setting, we generate $100$ realizations from the BPAM dynamics \eqref{eq:BPA} with sample sizes $n = 100, 200, 500, 1000$.

We calculate the MLEs $(\widehat{\pi}_1, \widehat{\pi}_2)$ by counting frequency, and the MLEs $(\widehat{\gamma}_{12}, \widehat{\gamma}_{22})$ via the gradient ascent algorithm.
For each sample size, we get $100$ sets of estimates. 
Figure \ref{fig:HPAMbox} provides boxplots of the MLEs with sample sizes $n = 100, 200, 500, 1000$.
Table \ref{table:HPAM} displays some statistics of the MLEs with different sample sizes.
\begin{figure}[ht]
\vskip 0.2in
\begin{center}
\centerline{\includegraphics[width=1 \columnwidth]{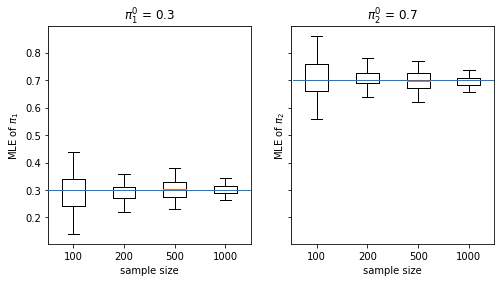}}
\centerline{\includegraphics[width=1 \columnwidth]{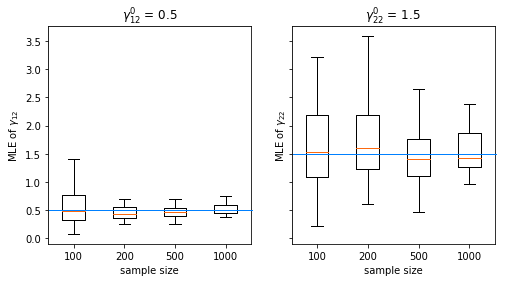}}
\caption{Boxplots of the MLEs of $(\pi_1, \pi_2, \gamma_{12}, \gamma_{22})$ for the HPAM with different sample sizes.}
\label{fig:HPAMbox}
\end{center}
\vskip -0.2in
\end{figure}

\begin{table}[ht]
\begin{center}
\begin{tabular}{ |l|c|c|c|c|} 
 \hline
$\pi_1 = 0.3$ & $n = 100 $  & $n = 200 $ & $n = 500 $ &  $n = 1000 $  \\ \hline
Mean & 0.291 & 0.297 & 0.302  & 0.303  \\ \hline
Median & 0.300 & 0.300 & 0.301  & 0.301  \\ \hline
Std & 0.071 & 0.036 & 0.033 & 0.024   \\ \hline
 \hline
$\pi_2 = 0.7$ & $n = 100 $  & $n = 200 $ & $n = 500 $ &  $n = 1000 $  \\ \hline
Mean & 0.709 & 0.703 & 0.698  & 0.697  \\ \hline
Median & 0.700 & 0.700 & 0.698  & 0.699  \\ \hline
Std & 0.071 & 0.036 & 0.033 & 0.024   \\ \hline
 \hline
$\gamma_{12} = 0.5$ & $n = 100 $  & $n = 200 $ & $n = 500 $ &  $n = 1000 $  \\ \hline
Mean & 0.736 & 0.486 & 0.476  & 0.520 \\ \hline
Median & 0.478 & 0.435 & 0.460  & 0.496  \\ \hline
Std & 0.837 & 0.203 & 0.122 & 0.098   \\ \hline
 \hline
$\gamma_{22} = 1.5$ & $n = 100 $  & $n = 200 $ & $n = 500 $ &  $n = 1000 $  \\ \hline
Mean & 1.680 & 1.555 & 1.480  & 1.529 \\ \hline
Median & 1.529 & 1.609 & 1.409  & 1.432  \\ \hline
Std & 1.842 & 1.457 & 0.588 & 0.514   \\ \hline
\end{tabular}
\end{center}
\caption{The mean, median and standard deviation of the MLEs for the HPAM with different sample sizes.}
\label{table:HPAM}
\end{table}

It can be observed from Figure \ref{fig:HPAMbox} and Table \ref{table:HPAM} that the MLEs $(\widehat{\pi}_1, \widehat{\pi}_2)$ get very close to the true values for $n \gtrapprox 100$, while the MLEs $(\widehat{\gamma}_{12}, \widehat{\gamma}_{22})$ seem to converge when $n \approx 1000$.
 This is partly due the stability of the gradient ascent algorithm to find the MLE of $\pmb{\gamma}$.
Also for each parameter, the standard deviation decreases as the sample size increases.
\subsection{Evolution of the Bitcoin network}
Bitcoin \cite{N08} is a distributed digital currency system which works without central governing.
Payments are processed by a peer-to-peer network of users connected through the internet. 
Here we apply the BPAM to the Bitcoin network. 
The data we use is available at \url{http://www.vo.elte.hu/bitcoin/zipdescription.htm} \cite{KPCV}.

In such a transaction, `A' is the receiver and `B' is the sender.
The data set consists of $30048983$ transactions up to December, 2013.
All these transactions are recorded with timestamps. 
In the Bitcoin network, there are two types of nodes: regular nodes and super nodes. 
Regular nodes represents normal users, while super nodes are professional miners or digital currency companies.
Super nodes are usually much more equipped than regular ones, and are more reliable in the transactions.
Hence, we model the Bitcoin network by the BPAM with $K = 2$.
The index `1' is used for super nodes, and `2' for regular nodes.
The idea is to fit the Buckley-Osthus model and the BPAM by MLE, and calculate the MLEs for the parameters of interest.

We preprocess the data by removing a few abnormal transactions, for instance, those related to the {\em SatoshiDice gambling} whose addresses start with `1Dice'.
Due to computation limits, we are unable to process all $3 \cdot 10^8$ transactions.
Instead, we consider the first $n = 5000, 10000, 20000, 50000$ transactions to get the corresponding parameter estimates.
We select a threshold of top $5 \%$ active users to distinguish super nodes from regular nodes.
Table \ref{table:Bitcoina} displays the MLE for the power-law exponent with different network sizes.
It can be seen that the exponent stabilizes around $4.4$ for $n \gtrapprox  20000$.
\begin{table}[h]
\begin{center}
\begin{tabular}{ |l|c|c|c|c|} 
 \hline
          & $n = 5000 $  & $n = 10000$ & $n = 20000$ &  $n =50000 $  \\ \hline
$\widehat{a}$ & 3.757 & 3.857  & 4.401  & 4.398  \\ \hline
\end{tabular}
\end{center}
\caption{The MLE for $a$ of the Bitcoin network.}
\label{table:Bitcoina}
\end{table}

Table \ref{table:Bitcoin} displays the MLEs for the BPAM parameters with different network sizes.
Observe that (1) the proportion of super nodes $\widehat{\pi}_1$ decreases as the network expands, (2) $\widehat{\gamma}_{12} > \widehat{\gamma}_{22}$ for $n = 5000$, and $\widehat{\gamma}_{12} < \widehat{\gamma}_{22}$ for $n \gtrapprox 10000$.
These phenomena can be interpreted as follows. 
At the early stage of the Bitcoin, many transactions were made through super nodes, and professional miners were the main player.
As the blockchain techniques are developed, more and more individuals participate in Bitcoin transactions.
This is the reason why we see an order change between $\widehat{\gamma}_{12}$ and $\widehat{\gamma}_{22}$.
The BPAM characterizes the evolution of the Bitcoin network.
\begin{table}[h]
\begin{center}
\begin{tabular}{ |l|c|c|c|c|} 
 \hline
          & $n = 5000 $  & $n = 10000$ & $n = 20000$ &  $n =50000 $  \\ \hline
$\widehat{\pi}_1$ & 0.410 & 0.312  & 0.225  & 0.199  \\ \hline
$\widehat{\pi}_2$ & 0.590 & 0.688 & 0.775 & 0.801  \\ \hline
$\widehat{\gamma}_{12} $ & 3.974 & 9.226 & 16.351 & 34.596   \\ \hline
$\widehat{\gamma}_{22} $ & 2.360 & 11.213 & 67.216 & 123.497   \\ \hline
\end{tabular}
\end{center}
\caption{The MLEs for $(\pi_1, \pi_2, \gamma_{12}, \gamma_{22})$ of the Bitcoin network.}
\label{table:Bitcoin}
\end{table}

\section{Conclusion and discussion}
\label{s4}
This paper deals with statistical estimation of two preferential attachment related models:
the Buckley-Osthus model, and the BPAM.
We prove the consistency of the MLE for both models, and corroborate the theory with simulation studies.
Though these models might be too simplistic for real-world networks (as shown in the Bitcoin example), they can capture some gross features and structural changes in the network.
Despite their limitations, these models may be used as a building piece for more flexible systems.

In the remaining of this section, we discuss a few open problems related to the BPAM.
As shown in Theorem \ref{thm:consistencyBPA}, if the memberships of all nodes are correctly classified, 
then the MLEs are consistent.
In fact, even if a small portion of memberships (e.g. of order $o(n)$) are misclassified, the consistency still holds.
Consider the case where all but the last node are correctly classified. 
It is easy to see that the statistics $(M_{ij}^n, N_j^n)$ is affected by $\pm 1$, and the limiting log-likelihood \eqref{eq:BPAloginfty} stays the same.
More generally, the degree of any node is of order $n^{\beta}$ for some $\beta < 1$. 
Thus, misclassifying any node will affect $(M_{ij}^n, N_j^n)$ by $\pm n^{\beta}$, which is absorbed by the $1/n$-scaling.
This leads to the problem of community detection of the BPAM.
\cite{HS18} gave a message passing algorithm to recover the memberships of $\Theta(1)$ nodes.
An important question is to which extent the memberships of nodes can be recovered for the BPAM.
\begin{op}
Is there any algorithm to recover the memberships of all nodes for the BPAM ?
If not, what is the maximum number of memberships which can be recovered ?
Is it of order $\Theta(n)$ ?
\end{op}

For the SBM, the MLEs of the marginal likelihood were shown to be asymptotically normal \cite{BCCZ13}. 
The key idea is that the likelihood with misclassifications is much smaller than that with all correct memberships.
However, this is not true for the BPAM since misclassifying a small portion of memberships will not affect much the likelihood. 
It is not clear whether the MLEs of the marginal likelihood \eqref{eq:marginalPA} give the correct memberships and good parameter estimates.
\begin{op}
Are the MLEs for the marginal likelihood \eqref{eq:marginalPA} consistent ?
\end{op}

From the computational viewpoint, the marginal likelihood \eqref{eq:marginalPA} is intractable so calculating the MLE is not efficient. 
In the case of the SBM, \cite{DPR08} applied the variational inference to remove the normalizing term so that the optimization can be easily solved by the EM algorithm. 
The corresponding estimates were also shown to be asymptotically normal \cite{BCCZ13}.
Here we can ask similar questions regarding the marginal likelihood for the BPAM.
\begin{op}
$~$
\begin{enumerate}
\item
Propose an approximation of the MLEs for the marginal likelihood \eqref{eq:marginalPA} which is computationally feasible.
\item
Is the approximation in (1) consistent ?
\item
How to choose the number of communities $K$ ? 
\end{enumerate}
\end{op}




\nocite{langley00}

\bibliography{example_paper}
\bibliographystyle{icml2020}

\newpage
\appendix

\section{Proof of Lemma \ref{prop:analysis}}
To simplify the notation, we omit the `BO' in the superscript.
We begin with a few algebraic identities for $p_k$. 
It is easy to see from \eqref{eq:N2}-\eqref{eq:N3} that 
\begin{equation}
\label{eq:recp}
p_k = \frac{k+a_0-2}{k+2a_0} p_{k-1} \quad \mbox{for } k \ge 2.
\end{equation}
Therefore, $\sum_{j = 2}^k (j+2a_0) p_j = \sum_{j = 2}^k (j+a_0-2) p_{j-1}$ which implies that 
\begin{equation}
\label{eq:tail}
p_{>k-1} = \frac{k+2a_0}{a_0+1} p_k \quad \mbox{for } k \ge 2.
\end{equation}
Further by summing both sides of \eqref{eq:tail}, we get $\sum_{k \ge 1} k p_k = 2$.
Observe that 
\begin{align*}
\ell'_{\infty}(a) & = \sum_{k \ge 0} \frac{p_{>k+1}}{a+k} - \frac{1}{a+1} \\
& = \sum_{k \ge 0} \frac{(k+2+2a_0) p_{k+2}}{(a_0 + 1)(a+k)} \\
& \qquad \qquad \qquad  - \frac{1}{a+1} \sum_{k \ge 0} \frac{k+2+2a_0}{k+a_0} p_{k+2} \\
& = \frac{a-a_0}{(a_0 + 1)(a+1)} \sum_{k \ge 0} \frac{(k+2+2a_0)(k-1)}{(k+a_0)(k+a)} p_{k+2} \\
& = \frac{a-a_0}{(a_0 + 1)(a+1)} \sum_{k \ge 0} \frac{k-1}{k+a} p_{k+1}.
\end{align*}
where the second equality is due to \eqref{eq:tail} and the last one stems from \eqref{eq:recp}.
In addition,
\begin{equation*}
\sum_{k \ge 0} \frac{k-1}{k+a} p_{k+1} \le \frac{1}{1+a} \sum_{k \ge 0} (k-1)p_{k+1} = 0,
\end{equation*}
where the last equality is due to the fact that $\sum_{k \ge 1} k p_k = 2$.
Therefore, $\ell'_{\infty}(\cdot)$ has a unique zero at $a_0$, and $\ell'_{\infty}(a) < 0$ if $a> a_0$ and $\ell'_{\infty}(a) > 0$ if $a < a_0$.
These imply that $\ell_{\infty}(\cdot)$ has a unique maximum at $a_0$.

Now we prove \eqref{eq:ellest}. We have
\begin{multline}
\label{eq:ellminus}
\ell'_n(a) - \ell'_{\infty}(a) = 
\sum_{k \ge 0} \frac{Z^n_{>k+1}/n - p_{>k+1}}{a+k} \\
+ \left(\frac{1}{n} \sum_{k =1}^n \frac{1}{a +1-k^{-1} } - \frac{1}{a+1}\right).
\end{multline}
Standard analysis shows that the second term on the r.h.s. of \eqref{eq:ellminus} goes to $0$ as $n \rightarrow \infty$.
Note that $(k+2) Z^n_{>k+1} = \sum_{j \ge k+2} (k+2) Z^n_j \le \sum_{j \ge k+2} j Z^n_j \le 2n$, which implies $Z^n_{>k+1}/n \le \frac{2}{k+2}$. 
Consequently,
\begin{multline}
\label{eq:supest}
\sup_{a > \varepsilon} \left|\sum_{k \ge 0} \frac{Z^n_{>k+1}/n - p_{>k+1}}{a+k}\right|
\le \sum_{k=0}^K \frac{|Z^n_{>k+1}/n - p_{>k+1}|}{\varepsilon + k}  \\ + \sum_{k > K} \frac{2}{(2+k)(a+k)} + \sum_{k > K} \frac{p_{>k+1}}{a+k}.
\end{multline}
The first term on the r.h.s. of \eqref{eq:supest} converges to $0$ a.s. by Theorem \ref{thm:convp}, and the last two terms can be made arbitrarily small for $K$ sufficiently large. 
Combining the above estimates yields the desired result.

\section{Proof of Lemma \ref{prop:fk}}

It follows easily from the definition that $(\sum_{k=1}^n f_k(a_0); \, n\ge 1)$ is a martingale.
To prove the convergence \eqref{eq:convfk}, it suffices to use Theorem 3.2 in \cite{HH80}
with the following conditions:
\begin{itemize}
\item
$n^{-1/2} \max_k |f_k(a_0)| \rightarrow 0$ in probability.
\item
$\mathbb{E}(n^{-1} \max_k f_k^2(a_0))$ is bounded in $n$.
\item
$n^{-1} \sum_{k=1}^n f_k^2(a_0) \rightarrow \sigma^2$ in probability.
\end{itemize}
The first two conditions are straightforward since $|f_k(a)| \le 2/a$. 
Now we check the last condition. 
Write
\begin{align*}
\frac{1}{n} \sum_{k=1}^n f_k^2(a_0) &= \frac{1}{n} \sum_{k=1}^n \frac{1}{(a_0 + d_k(v^{(k)})- 1)^2} + \frac{1}{n} \sum_{k=1}^n \frac{1}{(a_0+1-k^{-1})^2} \\
&  \quad  - \frac{2}{n}\sum_{k = 1}^ n \frac{1}{(a_0 + d_k(v^{(k)})- 1)(a_0+1-k^{-1})} \\
& := S_1 + S_2 - 2 S_3.
\end{align*}
Note that
\begin{equation*}
S_1 = \sum_{k \ge 0} \frac{Z_{>k+1}^n / n}{(a_0+k)^2} \longrightarrow \sum_{k \ge 0} \frac{p_{>k+1}}{(a_0+k)^2} \quad a.s.
\end{equation*}
which follows from Theorem \ref{thm:convp}.
By standard analysis, $S_2 \longrightarrow \frac{1}{(a_0+1)^2}$.
We decompose $S_3$ into two terms:
\begin{multline*}
S_3 = \frac{1}{n} \sum_{k=1}^n \frac{1}{a_0 + d_k(v^{(k)})- 1} \left(\frac{1}{a_0+1-k^{-1}} - \frac{1}{a_0+1}\right) \\ 
+ \frac{1}{(a_0+1)n}  \sum_{k=1}^n \frac{1}{a_0 + d_k(v^{(k)})- 1},
\end{multline*}
where the first term on the r.h.s. is bounded by $\frac{1}{an} \sum_{k = 1}^n \left(\frac{1}{a_0+1-k^{-1}} - \frac{1}{a_0+1}\right) \longrightarrow 0$, and the second term converges almost surely to $\frac{1}{a_0+1} \sum_{k \ge 0} \frac{p_{>k+1}}{a_0+k}$.
Combining all the above estimates yields the desired result.

\section{Proof of Lemma \ref{prop:fkprime}}

Write 
\begin{align*}
\frac{1}{n} \sum_{k = 1}^n f_k'(a^{\star}) &= \frac{1}{n} \sum_{k = 1}^n f_k'(a_0) + \frac{1}{n} \sum_{k=1}^n \left( f_k'(a^{\star}) - f_k'(a_0)\right) \\
& : = T_1 + T_2.
\end{align*}
Observe that $f_k'(a) = -f_k^2(a) - 2 f_k(a) \frac{1}{a+1-k^{-1}}$.
We get
\begin{equation}
\label{eq:T1}
T_1 = -\frac{1}{n} \sum_{k=1}^n f_k^2(a_0) - \frac{2}{n} \sum_{k=1}^n f_k(a_0) \frac{1}{a_0+1-k^{-1}}.
\end{equation}
The first term on the r.h.s. of \eqref{eq:T1} converges to $-\sigma^2$ as proved in Proposition \ref{prop:fk}.
Recall the definition of $S_2$, $S_3$. It is easy to see that
\begin{multline*}
\frac{1}{n}\sum_{k=1}^n f_k(a_0) \frac{1}{a_0+1-k^{-1}} = S_3 - S_2 \\
\longrightarrow \frac{1}{a+1} \sum_{k \ge 0} \frac{p_{>k+1}}{a_0+k} - \frac{1}{(a_0+1)^2}.
\end{multline*}
Therefore, $T_1 \longrightarrow -\beta$ in probability.
By standard analysis, $|T_2| \le C|a^{\star} - a_0|$ for some $C>0$. 
Note that $a^{\star} \in (a_0, \widehat{a}^{BO}_n)$. 
By Theorem \ref{thm:consistency}, $|a^{\star} - a_0| \longrightarrow 0$ which implies $T_2 \longrightarrow 0$.
The above estimates lead to the desired result.

\section{Proof of Lemma \ref{lem:bbb}}

As discussed in Section \ref{s23}, the consistency of $\widehat{\pmb \pi}$ follows from standard exponential family theory.
It suffices to prove that $\widehat{\pmb \gamma} \rightarrow {\pmb \gamma}^0$ almost surely.

Let us go back to the limit log-likelihood \eqref{eq:BPAloginfty}.
Observe that $\ell^{BPA}_{\infty}$ is homogeneous of order $1$, i.e. $\ell^{BPA}_{\infty}(a {\pmb \gamma}) = \ell^{BPA}_{\infty}({\pmb \gamma})$ for each $a > 0$. 
By taking the partial derivatives of \eqref{eq:BPAloginfty} and equating to $0$, we get
\begin{multline}
\frac{\partial}{\partial \gamma_{ij}}\ell^{BPA}_{\infty}({\pmb \gamma}) \\= \left\{ \begin{array}{ccl}
\frac{\theta_{ij}^0}{\gamma_{ij}} - \frac{\pi_i^0 p_j^0}{\sum_{k=1}^K \gamma_{ik} p_k^0} -  \frac{\pi_j^0 p_i^0}{\sum_{k=1}^K \gamma_{jk} p_k^0} & \mbox{for} & i \ne j,  \\[10 pt]
\frac{\theta_{ii}^0}{\gamma_{ii}} - \frac{\pi_i^0 p_i^0}{\sum_{k = 1}^K \gamma_{ik} p_k^0} & \mbox{for} & i = j. 
\end{array}\right.
\end{multline}

By \eqref{eq:fixedtheta}, we have $\nabla \ell_{\infty}^{BPA}({\pmb \gamma}_0) = {\pmb{0}}$, i.e. ${\pmb \gamma}_0$ is a stationary point of $\ell_{\infty}^{BPA}$. 
Now it suffices to prove Lemma \ref{lem:bbb} to conclude.

Note that $\ell^{BPA}_{\infty}({\pmb \gamma}) \rightarrow -\infty$ as ${\pmb \gamma} \in \partial \mathcal{D}$.
It suffices to prove that $\nabla \ell_{\infty}^{BPA}({\pmb \gamma}) = {\pmb{0}}$ has a unique solution. 
First $\partial \ell^{BPA}_{\infty}/\partial \gamma_{ii} = 0$ gives 
\begin{equation}
\label{eq:sumden}
\sum_{k = 1}^K \gamma_{ik} p_k^0 = \frac{\pi_i^0 p_i^0}{\theta_{ii}^0} \gamma_{ii}.
\end{equation}
By injecting \eqref{eq:sumden} into the equation $\partial \ell^{BPA}_{\infty}/\partial \gamma_{ij} = 0$, we get
\begin{equation}
\label{eq:ijii}
\frac{\theta_{ij}^0}{\gamma_{ij}} = \frac{\theta_{ii}^0 p_j^0}{p_i^0} \frac{1}{\gamma_{ii}} + \frac{\theta_{jj}^0 p_i^0}{p_j^0} \frac{1}{\gamma_{jj}}.
\end{equation}
Consequently, the values of $(\gamma_{ij}; \, i \ne j)$ is uniquely determined by those of $(\gamma_{ii}; \, 1 \le i \le K)$.
By injecting \eqref{eq:ijii} into \eqref{eq:sumden}, we get a system of equations on $(\gamma_{ii}; \, 1 \le i \le K)$:
\begin{equation}
\label{eq:ii}
\sum_{k = 1}^K \theta^0_{ik} \left( \frac{\theta_{ii}^0 p_j^0}{p_i^0} \frac{1}{\gamma_{ii}} + \frac{\theta_{kk}^0 p_i^0}{p_k^0} \frac{1}{\gamma_{kk}}\right)^{-1} p_k^0 = \frac{\pi_i^0 p_i^0}{\theta_{ii}^0} \gamma_{ii}
\end{equation}
For $K = 2$, it is easy to solve the equations together with the constraints $\gamma_{11}= 1$. 
For $K \ge 3$, the explicit solution is not available but we prove that the equations have a unique solution.
To illustrate, we consider the generic case $K =3$. All other cases can be proceeded in a similar way.

Let $x_1: = \frac{\theta^0_{11} p_2^0}{p_1^0} \gamma_{22} \left( \frac{\theta^0_{11} p_2^0}{p_1^0} \gamma_{22} + \frac{\theta^0_{22} p_1^0}{p_2^0} \gamma_{11}\right)^{-1}$, 
$x_2: = \frac{\theta^0_{11} p_3^0}{p_1^0} \gamma_{33} \left( \frac{\theta^0_{11} p_3^0}{p_1^0}\gamma_{33} + \frac{\theta^0_{33} p_1^0}{p_3^0} \gamma_{11} \right)^{-1}$, 
and $x_3: = \frac{\theta^0_{33} p_2^0}{p_3^0} \gamma_{22} \left( \frac{\theta^0_{33} p_2^0}{p_3^0} \gamma_{22}+ \frac{\theta^0_{22} p_3^0}{p_2^0} \gamma_{33} \right)^{-1}$.
The equations \eqref{eq:ii} give
\begin{equation}
\label{eq:system}
\left\{\begin{array}{lcl}
\theta_{12}^0 x_1 + \theta_{13}^0 x_2 = \pi_{1}^0 - \theta_{11}^0,  \\[10 pt]
\theta_{21}^0 (1-x_1) + \theta_{23}^0 (1 - x_3) = \pi_2^0 - \theta_{22}^0, \\ [10 pt]
\theta_{31}^0(1-x_2) + \theta_{32}^0 x_3 =  \pi_3^0 - \theta_{33}^0.
\end{array}\right.
\end{equation}
It suffices to prove that the equations \eqref{eq:system} have a unique solution.
Observe that the system \eqref{eq:system} has a solution $(x_1^0, x_2^0, x_3^0)$ by taking $\gamma_{ii} = \gamma_{ii}^0$.
Algebraic manipulation shows that the set of solutions to \eqref{eq:system} has dimension $1$, with form
\begin{equation*}
(x_1, x_2, x_3) = (x_1^0, x_2^0, x_3^0) + \lambda (1, -\theta_{12}^0/\theta_{13}^0, -\theta_{21}^0/\theta_{23}^0).
\end{equation*}
Consequently,
\begin{equation*}
\frac{\gamma_{11}}{\gamma_{22}} = \frac{\theta_{11}^0 (p_2^0)^2}{\theta_{22}^0 (p_1^0)^2} \frac{1-x_0-\lambda}{x_0 + \lambda}, \quad \frac{\gamma_{11}}{\gamma_{13}} = \frac{\theta_{11}^0 (p_3^0)^2}{\theta_{33}^0 (p_1^0)^2} \frac{1-y_0+ \lambda\theta_{12}^0 \theta_{13}^0 }{y_0 - \lambda\theta_{12}^0 \theta_{13}^0}
\end{equation*}
\begin{equation*}
\frac{\gamma_{33}}{\gamma_{22}} =  \frac{\theta_{33}^0 (p_2^0)^2}{\theta_{22}^0 (p_3^0)^2} \frac{1-z_0 + \lambda \theta_{21}^0/\theta_{23}^0 }{z_0 - \lambda \theta_{21}^0/\theta_{23}^0},
\end{equation*}
which implies that
\begin{equation}
\label{eq:cyclic}
\frac{1-x_0-\lambda}{x_0 + \lambda} = \frac{(1-y_0+ \lambda\theta_{12}^0 \theta_{13}^0 )(1-z_0 + \lambda \theta_{21}^0/\theta_{23}^0)}{(y_0 - \lambda\theta_{12}^0 \theta_{13}^0)(z_0 - \lambda \theta_{21}^0/\theta_{23}^0)}. 
\end{equation}
Note that the l.h.s. of \eqref{eq:cyclic} is decreasing in $\lambda$ while the r.h.s. is increasing in $\lambda$. 
Thus, $\lambda = 0$ is the only solution which proves the uniqueness.

\end{document}